\def\C{\mathbb{C}}
\def\S{\mathbb{S}}
\def\H{\mathbb{H}}
\def\P{\mathbb{P}}
\def\cC{\mathcal{C}}
\def\cD{\mathcal{D}}
\def\cQ{\mathcal{Q}}
\def\al{\alpha}
\def\ga{\gamma}
\def\de{\delta}
\def\si{\sigma}
\def\De{\Delta}
\def\Om{\Omega}
\newcommand{\der}{{\rm d}}
\numberwithin{equation}{section}
\newtheorem{theorem}{Theorem}[section]
\newtheorem{proposition}{Proposition}[section]
\newtheorem{corollary}{Corollary}[section]
\newtheorem{Definition}{Definition}[section]
\theoremstyle{remark}
\author{Matthew Randall}
\address{Department of Mathematics and Statistics\\
Faculty of Science, Masaryk University\\
Kotl\'a\v{r}sk\'a 2, 611 37 Brno\\
Czech Republic}
\email{randallm@math.muni.cz}
\title{Schwarz triangle functions and duality for certain parameters of the generalised Chazy equation}
\thanks{This work is supported by the Grant agency of the Czech Republic P201/12/G028.}
\begin{document}

\begin{abstract}
Schwarz triangle functions play a fundamental role in the solutions of the generalised Chazy equation. Chazy has shown that for the parameters $k=2$ and $3$, the equations can be linearised. We determine the Schwarz triangle functions that represent the solutions in these cases. Some of these Schwarz triangle functions also show up in the dual cases where $k=\frac{2}{3}$ and $k=\frac{3}{2}$, which suggests intriguing connections between the solutions for $k=2$ and $k=3$ with dihedral and tetrahedral symmetry respectively, and $k=\frac{2}{3}, \frac{3}{2}$ with $G_2$ symmetry.
Integrating the solutions when $k=\frac{2}{3}$, we obtain flat $(2,3,5)$-distributions parametrised algebraically by the corresponding Schwarz triangle functions. The Legendre dual of these curves are also algebraic, can be computed quite easily and are related to the integral curves of the dual generalised Chazy equation with parameter $\frac{3}{2}$. 
\end{abstract}

\maketitle

\pagestyle{myheadings}
\markboth{Randall}{Schwarz triangle functions and duality for certain parameters of the generalised Chazy equation}

The generalised Chazy equation is a third order nonlinear autonomous ordinary differential equation given by
\begin{equation}\label{gc0}
y'''-2 y'' y+3 (y')^2-\frac{4}{36-k^2}(6y'-y^2)^2=0
\end{equation}
for $k \neq 6$. The equation (\ref{gc0}) was introduced by Jean Chazy in the papers \cite{chazy1} and \cite{chazy2} in the context of investigating the Painlev\'e property for third order ODEs. 

The equation (\ref{gc0}) can be solved by Schwarz triangle functions \cite{acht}. Schwarz triangle functions determine through its inverse a map from the complex upper half plane to an open triangular domain with boundary given by the edges of the triangle. The angles of the triangle determined by the Schwarz functions depend on the parameter $k$ in (\ref{gc0}).
When $k<6$, the image is a spherical triangle. When $k=6$, the triangle is planar and when $k>6$, the triangle is hyperbolic.  In this article we present the spherical Schwarz triangle functions corresponding to the solutions when $k$ is given by $\frac{2}{3}$, $\frac{3}{2}$, $2$ and $3$ and determine them algebraically.

These four parameters are chosen because the equations also show up in the context of the geometry of differential equations. The problem of determining whether the solution set of a system of differential equation is equivalent to another, via for instance point or contact preserving transformation, can be solved using Cartan's method of equivalence. 
 
The equations for the parameters $k=2$ and $k=3$ are shown to be linearisable by Chazy himself (see p. 346 of \cite{chazy2}). The equation when $k=2$ is linearisable to the ODE $y''''=0$. This is related to the third order Riccati equation as observed in \cite{guha}. Applying Cartan's method of equivalence, this 4th order ODE has vanishing Wilczynski invariants in the linear theory and also vanishing Bryant invariants in the non-linear theory \cite{bryant}, \cite{doubrov}, \cite{gp}, \cite{gl2r}. We discuss the general solution when $k=2$ in Section \ref{gen2}.

The method of equivalence also applies to third order ODEs, as worked out by Chern \cite{chern}. The equation for the parameter $k=3$ turns out to be the only equation of the form (\ref{gc0}) that has vanishing W\"unschmann invariant. Third order ODEs with vanishing W\"unschmann invariant defines a conformal structure of signature $(2,1)$ on the space of its solutions. The conformal metric is obtained by quotienting out a degenerate split signature symmetric 2-tensor by the vector field that annihilates the distribution encoding the ODE $y'''=F(x,y,p,q)$. ODEs with vanishing W\"unschmann invariant and $F_{qqqq}=0$, satisfied for the $k=3$ equation, are contact equivalent to the equation $y'''=0$ \cite{gn}. The equation for this parameter $k=3$ is also linearisable and the general solution to this equation is described in Section \ref{gen3}.

The equations for the parameters $k=\pm \frac{3}{2}$ and $k=\pm \frac{2}{3}$ show up in the local equivalence problem for generic maximally non-integrable (or bracket generating) rank $2$ distributions on $5$-manifolds $M$ that depend on a single function $F(x)$. Here the genericity condition implies that $F''(x) \neq 0$. For such non-integrable distributions, the bracket of the vector fields spanning the distribution $\cD$ determines a filtration of the tangent bundle given by $\cD \subset [\cD,\cD]\subset TM=[[\cD,\cD],\cD]$ with the rank of $\cD=2$ and the rank of $[\cD,\cD]=3$. Such distributions are therefore also known as $(2,3,5)$-distributions. Cartan solved the local equivalence problem for such geometries in \cite{cartan1910} and constructed the fundamental curvature invariant. For distributions of the form $F(x)$ as described, the curvature invariant vanishes when $F''(x)=e^{\frac{1}{2}\int y(x) \der x}$ where $y(x)$ is a solution to the generalised Chazy equation (\ref{gc0}) with parameter $k=\pm \frac{2}{3}$. In this case the distribution has split $G_2$ as its local group of symmetries. 
Furthermore An and Nurowski \cite{annur} showed that there is a duality that takes the solutions of this equation to the solutions of the submaximal $7^{\rm th}$ order ODE 
\begin{equation}\label{noth}
10(y''')^3y^{(7)}-70(y''')^2y^{(4)}y^{(6)}-49(y''')^2(y^{(5)})^2+
280y'''(y^{(4)})^2y^{(5)}-175(y^{(4)})^4=0
\end{equation}
studied in \cite{olver} (where it appears in equation (6.64)), \cite{doubrov} and \cite{ds}. Historically, this ODE appeared already in the thesis of Noth \cite{noth} in 1904. 

This dual ODE (\ref{noth}) can also be reduced to a generalised Chazy equation but now the Chazy parameter is given by $k=\pm \frac{3}{2}$. In this fashion, the solutions with parameters $k=\frac{3}{2}$ and $k=\frac{2}{3}$ have vanishing Cartan invariants. We discuss the solutions for both these equations in Sections \ref{gen23} and \ref{gen32}. We show in Appendix \ref{uod} that the Legendre duality property for the generalized Chazy equations with parameters $k=\pm \frac{3}{2}$ and $k=\pm \frac{2}{3}$ is unique only for these parameters.

Interestingly, one of the Schwarz triangle functions that solve the $k=2$ equation also show up in the solutions to the $k=\frac{2}{3}$ equation.
Three of the Schwarz triangle functions that solve the $k=3$ equation also show up in the solutions to the $k=\frac{3}{2}$ equation. For the $k=2$ and $k=\frac{2}{3}$ cases, the Schwarz triangle functions are pullbacks through hypergeometric transformation of the Schwarz function $s(\frac{1}{2},\frac{1}{3},\frac{1}{2},x)$ that appears in Schwarz's list \cite{schwarz} with dihedral symmetry. For the $k=3$ and $k=\frac{3}{2}$ cases, the functions are pullbacks of the function $s(\frac{1}{2},\frac{1}{3},\frac{1}{3},x)$ that appears in Schwarz's list with tetrahedral symmetry. The result of Schwarz \cite{schwarz} and Klein tells us that the Schwarz functions we obtain in these cases are algebraic. The maps for the triangle functions that show up in the cases $k=\frac{2}{3}$, $\frac{3}{2}$, $2$ and $3$ are given in the Diagrams \ref{map1}, \ref{map2} and \ref{map3}. They show the hypergeometric transformations that are given by quadratic, cubic and quartic maps \cite{goursat}, \cite{hyper}.

Another motivation for the article is to work out the Schwarz functions that solve the $k=\frac{2}{3}$ and $k=\frac{3}{2}$ equation and determine examples of $(2,3,5)$-distributions with maximal symmetry group of split $G_2$. The solutions to the $k=\frac{2}{3}$ equation has appeared in \cite{r16}, but we present them more explicitly here in the form of Table \ref{table2}. We also present the solutions when $k=\frac{3}{2}$ here. To work out the distributions $D_{F(x)}$ with vanishing Cartan curvature invariant, we have to determine $F''(x)$ from the solutions of the $k=\frac{2}{3}$ Chazy equation and integrate twice further. This gives an algebraic relation involving $(x,F)$. The dual curve of this plane algebraic curve gives us integral curves of equation (\ref{noth}).

The first two sections are background material. In Section \ref{sl2-section} we set up the preliminaries and consider $SL_2(\C)$ equivalent classes of solutions to (\ref{gc0}). In Section \ref{schwarz-section} we review the definitions of Schwarz functions and in Sections \ref{gen2}, \ref{gen23}, \ref{gen3}, \ref{gen32} we present the Schwarz triangle functions that appear in the solutions of the Chazy's equation for $k=2$, $k=\frac{2}{3}$, $k=3$ and $k=\frac{3}{2}$ respectively.
The computations are done through MAPLE 17. 

\tableofcontents

\section{$SL_2(\C)$ action on the space of solutions}\label{sl2-section}

The material in the first two sections is collated from \cite{chazy2}, \cite{co96}, \cite{cosgrove}, \cite{ach} and \cite{acht}. We shall work over the complex field. 
 Any element $g={\bigl(\begin{smallmatrix}a & b\\ c& d \end{smallmatrix}}\bigr)\in SL_2(\C)$ acts on $x$ by fractional linear transformations $g \cdot x=\tilde x=\frac{ax+b}{cx+d}$.
 \begin{proposition} (See also \cite{co96}, \cite{ach})
Under this action of $SL_2(\C)$, for any solution $y(x)$ to the generalised Chazy equation (\ref{gc0}) with $k \neq 6$, we obtain new solutions to (\ref{gc0}) by
 \begin{align}\label{sl2}
 \tilde y(x)=\frac{1}{(cx+d)^2}y\left(\frac{ax+b}{cx+d}\right)-\frac{6c}{cx+d}.
 \end{align}
 \end{proposition}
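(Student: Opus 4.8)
The plan is to verify directly that the substitution (\ref{sl2}) is a covariance of (\ref{gc0}): each of the two homogeneous building blocks of the left-hand side rescales by a fixed power of the factor $cx+d$, so that the whole expression is merely multiplied by a nonvanishing scalar and therefore vanishes for $\tilde y$ precisely when it vanishes for $y$. Concretely, I would fix the notation $\rho=cx+d$ and $\tilde x=\frac{ax+b}{cx+d}$, use the relation $ad-bc=1$ to get $\frac{d\tilde x}{dx}=\rho^{-2}$, and abbreviate $Y^{(j)}=y^{(j)}(\tilde x)$ for the derivatives of $y$ in its own argument. Differentiating (\ref{sl2}) and repeatedly applying $\frac{d}{dx}\rho^{-n}=-nc\,\rho^{-n-1}$ and $\frac{d}{dx}Y^{(j)}=\rho^{-2}Y^{(j+1)}$, I would record closed expressions for $\tilde y'$, $\tilde y''$ and $\tilde y'''$ as $Y,Y',Y'',Y'''$ times powers of $\rho$, together with the anomalous terms produced by the summand $-6c\rho^{-1}$ and by differentiating the prefactor $\rho^{-2}$.

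The key intermediate step is a weight-$4$ covariance of the quantity $w=6y'-y^2$. A short computation from the formula for $\tilde y'$ shows that the terms linear in $c$ and the terms in $c^2$ cancel between $6\tilde y'$ and $\tilde y^2$, leaving
\begin{align}
6\tilde y'-\tilde y^2=\rho^{-4}\bigl(6Y'-Y^2\bigr),
\end{align}
and hence $(6\tilde y'-\tilde y^2)^2=\rho^{-8}(6Y'-Y^2)^2$. It then remains to establish the matching weight-$8$ transformation of the cubic block,
$$\tilde y'''-2\tilde y''\tilde y+3(\tilde y')^2=\rho^{-8}\bigl(Y'''-2Y''Y+3(Y')^2\bigr).$$
Granting these two displays, the left-hand side of (\ref{gc0}) evaluated on $\tilde y$ equals $\rho^{-8}$ times the same expression evaluated on $y$ at $\tilde x$; since $\rho^{-8}\neq0$ and $y$ solves (\ref{gc0}), so does $\tilde y$. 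Note that the constant coefficient $\frac{4}{36-k^2}$ plays no role beyond multiplying the already-covariant factor $w^2$, so the argument is uniform in $k\neq6$.

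The main obstacle is the verification of the weight-$8$ display, which is the only genuinely laborious part. After substituting the expressions for $\tilde y',\tilde y'',\tilde y'''$, one obtains, besides the desired pure $\rho^{-8}$ contribution, a collection of anomalous terms of the types $\rho^{-4}c^4$, $\rho^{-5}c^3Y$, $\rho^{-6}c^2Y^2$, $\rho^{-6}c^2Y'$, $\rho^{-7}cY''$ and $\rho^{-7}cYY'$. I would organise the computation by collecting coefficients type by type and checking that each of these six sums vanishes. The delicate point is that these cancellations depend on the precise numerical coefficients $-2$ and $3$ in the cubic block and on the coefficient $6$ in the anomalous term $-6c\rho^{-1}$ of (\ref{sl2}); any bookkeeping error surfaces immediately as a surviving anomalous term. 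Once all six sums are seen to cancel, the proof is complete.
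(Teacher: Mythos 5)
Your proposal is correct, and the deferred computations do go through: with $\tilde y=\rho^{-2}Y-6c\rho^{-1}$ one gets $\tilde y'=\rho^{-4}Y'-2c\rho^{-3}Y+6c^2\rho^{-2}$, $\tilde y''=\rho^{-6}Y''-6c\rho^{-5}Y'+6c^2\rho^{-4}Y-12c^3\rho^{-3}$, $\tilde y'''=\rho^{-8}Y'''-12c\rho^{-7}Y''+36c^2\rho^{-6}Y'-24c^3\rho^{-5}Y+36c^4\rho^{-4}$, and the six anomalous families in the cubic block cancel with coefficients $-12+12=0$ (for $c\rho^{-7}Y''$), $12-12=0$ (for $c\rho^{-7}YY'$), $36-72+36=0$ (for $c^2\rho^{-6}Y'$), $-12+12=0$ (for $c^2\rho^{-6}Y^2$), $-24+96-72=0$ (for $c^3\rho^{-5}Y$) and $36-144+108=0$ (for $c^4\rho^{-4}$), so the block $y'''-2y''y+3(y')^2$ is indeed covariant of weight $8$ on its own. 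The paper follows the same covariance strategy and proves the identical weight-$4$ identity for $6y'-y^2$, but it organizes the weight-$8$ step through a different intermediate lemma: it first establishes the weight-$6$ identity $(cx+d)^6(9\tilde y''-9\tilde y\tilde y'+\tilde y^3)=9y''-9yy'+y^3$ and then differentiates once more to obtain the weight-$8$ transformation of the full Chazy expression in one stroke. That route keeps each stage a single differentiation of an already-proved identity, so the cancellations stay short and structured; your route skips the weight-$6$ quantity and pays for it with the six-way bookkeeping you describe, but in exchange it isolates the slightly sharper fact that the cubic block alone (not just the full combination with the $(6y'-y^2)^2$ term) transforms with weight $8$ — a fact that in the paper's argument is only implicit, obtained by subtracting the square of the weight-$4$ identity. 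Both arguments are uniform in $k\neq 6$ for the reason you give.
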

 \begin{proof}
 The action of $SL_2$ gives the differential relation $\der \tilde x=\frac{1}{(cx+d)^2}\der x$ and $\frac{\der}{\der \tilde x}=(cx+d)^2 \frac{\der}{\der x}$. Differentiating, we find
  \begin{align*}
(c x+d)^{4}(6\tilde y'-\tilde y^2)= 6 y' -y^2
 \end{align*}
 and 
 \begin{align*}
(c x+d)^{6}(9 \tilde y''-9 \tilde y\tilde y'+\tilde y^3)=9 y''-9 y y'+y^3,
 \end{align*}
 where prime on the left hand side denotes differentiation with respect to $x$ while prime on the right hand side denotes differentiation with respect to $\tilde x.$
Differentiating once more gives
\begin{align*}
&(c x+d)^{8}\left(\tilde y'''-2\tilde y'' \tilde y +3(\tilde y')^2-\frac{4}{36-k^2}(6\tilde y'-\tilde y^2)^2\right)\\
=&y'''-2 y'' y+3 (y')^2-\frac{4}{36-k^2} (6 y' -y^2)^2.
 \end{align*}
 We see that $\tilde y(x)$ is a solution to (\ref{gc0}) iff $y(\tilde x)$ is a solution as well. 
\end{proof}

Let $f(x)=\exp(\frac{2}{k-6} \int y \der x)$. 
Chazy makes this substitution and finds that $f$ satisfies the differential equation
\begin{align}\label{intc}
ff''''-(k-2)f' f'''+\frac{3k (k-2)}{2(k+6)}(f'')^2=0.
\end{align}
It is immediate from (\ref{intc}) that when $k=2$, the equation becomes linear, and we shall discuss this further in Section \ref{gen2}. When we integrate (\ref{sl2}), we obtain 
 \begin{align*}
 \int \tilde y(x) \der x=& \int \frac{1}{(c x+d)^2} y(\tilde x) \der x-\int \frac{6c}{c x+d}\der x\\
 =&\int y(\tilde x) \der \tilde x-6 \log( cx+d)+c_0.
 \end{align*}
We find that 
\begin{align*}
\tilde f(x)=\exp\left({\frac{2}{k-6}\int \tilde y(x) \der x}\right)
 =&\frac{\exp({\frac{2c_0}{k-6}})}{(c x+d)^{\frac{12}{k-6}}}\exp\left({\frac{2}{k-6}\int y(\tilde x) \der \tilde x}\right)=\frac{\exp(\frac{2c_0}{k-6})}{(c x+d)^{\frac{12}{k-6}}}f(\tilde x).
\end{align*} 
Absorbing constants (or normalising them so that $c_0=0$), we see that
\begin{align*}
\tilde f(x)=\frac{1}{(c x+d)^\frac{12}{k-6}}f(\tilde x).
\end{align*} 
This motivate the following definition. 

\begin{Definition}\label{def2}
Suppose both functions $f(x)$ and $\tilde f(x)=(cx+d)^{-\frac{12}{k-6}}f(\tilde x)$ satisfy the same differential equation (\ref{intc}). Then we say that the function $f(x)$ has {\bf weight} $\frac{12}{k-6}$ since $f(\tilde x)=(cx+d)^{\frac{12}{k-6}}\tilde f(x)$  (following the convention in the literature about weights of modular forms).
\end{Definition}

Let us take $k=\frac{2}{3}$ and suppose $f(x)=(F''(x))^{-\frac{3}{4}}$ for some $F(x)$. Then $F''(x)$ has weight $3$ under the action of $SL_2(\C)$ and we find that $F(x)$ satisfies the 6th order ODE 
\begin{align}\label{6ode}
10F^{(6)}(F'')^3-80(F'')^2F^{(3)}F^{(5)}-51(F'')^2(F^{(4)})^2
+336F''(F''')^2F^{(4)}-224(F''')^4=0
\end{align}
in \cite{annur} upon substituting $f=(F'')^{-\frac{3}{4}}$ into (\ref{intc}). 
\begin{proposition}
If $F(x)$ is a solution to the 6th order ODE (\ref{6ode}), then so is 
\[
\tilde F(x)=(cx+d)F\left(\frac{ax+b}{cx+d}\right)
\]
where $ad-bc=1$.
\end{proposition}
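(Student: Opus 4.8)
The plan is to avoid substituting $\tilde F$ directly into the sixth-order equation \eqref{6ode} and instead to exploit the $SL_2(\C)$-invariance of the fourth-order equation \eqref{intc} that is already established. The crucial point is that \eqref{6ode} was produced exactly by inserting $f=(F'')^{-3/4}$ into \eqref{intc} with $k=\frac{2}{3}$; since this substitution only multiplies the left-hand side of \eqref{intc} by a nonvanishing power of $F''$, one has (wherever $F''\neq 0$) the equivalence that $F$ solves \eqref{6ode} if and only if $f=(F'')^{-3/4}$ solves \eqref{intc}. So I would begin by reading the hypothesis as the statement that $f=(F'')^{-3/4}$ is a solution of \eqref{intc}.

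First I apply the proposition on the $SL_2(\C)$ action to this $f$. For $k=\frac{2}{3}$ the weight is $\frac{12}{k-6}=-\frac{9}{4}$, so that proposition, together with Definition \ref{def2}, yields a further solution of \eqref{intc}, namely
\[
\tilde f(x)=(cx+d)^{9/4}f(\tilde x)=(cx+d)^{9/4}\bigl(F''(\tilde x)\bigr)^{-3/4},\qquad \tilde x=\frac{ax+b}{cx+d}.
\]

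Next comes the only genuine computation: showing that the candidate $\tilde F(x)=(cx+d)F(\tilde x)$ realises $\tilde f$ via $\tilde f=(\tilde F'')^{-3/4}$. Using $ad-bc=1$ one has $\tilde x'=(cx+d)^{-2}$, and two applications of the product and chain rules give
\begin{align*}
\tilde F'(x)&=cF(\tilde x)+(cx+d)^{-1}F'(\tilde x),\\
\tilde F''(x)&=c(cx+d)^{-2}F'(\tilde x)-c(cx+d)^{-2}F'(\tilde x)+(cx+d)^{-3}F''(\tilde x).
\end{align*}
The two $F'$-terms cancel, leaving $\tilde F''(x)=(cx+d)^{-3}F''(\tilde x)$; that is, $F''$ transforms as a weight-$3$ object, in agreement with the statement preceding \eqref{6ode}. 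Raising to the power $-\frac{3}{4}$ then gives $(\tilde F'')^{-3/4}=(cx+d)^{9/4}\bigl(F''(\tilde x)\bigr)^{-3/4}=\tilde f$.

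Finally I would assemble the argument: $\tilde f$ solves \eqref{intc} and $\tilde f=(\tilde F'')^{-3/4}$, so by the same equivalence used at the outset, now read in the reverse direction for $\tilde F$, the function $\tilde F$ solves \eqref{6ode}. The main point is not any hard estimate but the recognition that the problem should be routed through \eqref{intc} rather than attacked head-on, together with careful bookkeeping of the fractional exponents; the decisive cancellation of the $F'$-terms is where $ad-bc=1$ enters, guaranteeing $\tilde x'=(cx+d)^{-2}$ and hence the exact weight-$3$ scaling. A pedantic remark worth one line is the branch ambiguity in $(F'')^{-3/4}$, but since \eqref{intc} and \eqref{6ode} are polynomial identities in the derivatives of $f$ and $F$ respectively, any fixed choice of root is consistent throughout and the conclusion is unaffected.
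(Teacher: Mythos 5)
Your proof is correct and is essentially the argument the paper intends: the paper states this proposition without a formal proof, but the surrounding text (the substitution $f=(F'')^{-3/4}$ into (\ref{intc}), the weight $\frac{12}{k-6}=-\frac{9}{4}$ for $k=\frac{2}{3}$, and the relation $\tilde F''(x)=(cx+d)^{-3}F''(\tilde x)$ recorded immediately afterwards) is exactly the weight-bookkeeping route you follow. Your explicit treatment of the nonvanishing of $F''$ and the branch choice in the fractional power is a minor refinement of what the paper leaves implicit.
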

According to the definition given above, the function $F(x)$ has weight $-1$. We have
\begin{corollary}
The function $F(x)=x^2$ is a solution to the 6th order ODE (\ref{6ode}), and therefore so is 
\[
\tilde F(x)=(cx+d)\left(\frac{ax+b}{cx+d}\right)^2=\frac{(ax+b)^2}{cx+d}
\]
where $ad-bc=1$.
\end{corollary}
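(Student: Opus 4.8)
The plan is to settle the two assertions of the corollary in turn, both of which reduce to facts already in hand. For the first assertion, that $F(x)=x^2$ solves the sixth order ODE (\ref{6ode}), I would proceed by direct substitution. Computing the derivatives of $F(x)=x^2$ gives $F'(x)=2x$ and $F''(x)=2$, whence
\[
F'''(x)=F^{(4)}(x)=F^{(5)}(x)=F^{(6)}(x)=0,
\]
since every derivative of order at least three of a quadratic polynomial vanishes identically.

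The key observation is then that every monomial on the left hand side of (\ref{6ode}) carries at least one factor drawn from $\{F^{(6)},F^{(5)},F^{(4)},F'''\}$: the first term carries $F^{(6)}$, the second carries both $F^{(3)}$ and $F^{(5)}$, the third carries $(F^{(4)})^2$, the fourth carries $(F''')^2$ together with $F^{(4)}$, and the fifth carries $(F''')^4$. As each such higher derivative vanishes for $F(x)=x^2$, every monomial vanishes and the left hand side is identically zero. Thus $F(x)=x^2$ is a solution, which verifies the first claim with no genuine computation required.

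For the second assertion I would simply invoke the preceding Proposition, which asserts that if $F(x)$ solves (\ref{6ode}) then so does $\tilde F(x)=(cx+d)F\bigl(\frac{ax+b}{cx+d}\bigr)$ whenever $ad-bc=1$. Applying this to the particular solution $F(x)=x^2$ established above yields
\[
\tilde F(x)=(cx+d)\left(\frac{ax+b}{cx+d}\right)^2=\frac{(ax+b)^2}{cx+d},
\]
which is therefore a solution as well.

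As for where the difficulty lies, there is essentially none: the corollary is an immediate consequence of the Proposition once the trivial base case $F=x^2$ is checked. The only point worth flagging is the interpretation in terms of Definition \ref{def2}. Here $F(x)=x^2$ is a solution of weight $-1$, and the three-parameter family $\tilde F(x)=(ax+b)^2/(cx+d)$ generated by the $SL_2(\C)$ action is precisely its $SL_2(\C)$ orbit, exhibiting an explicit algebraic family of solutions of (\ref{6ode}) seeded from this simplest representative.
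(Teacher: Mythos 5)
Your proof is correct and follows exactly the route the paper intends: the corollary is stated without proof as an immediate consequence of the preceding Proposition, given the (well-known, later cited from Cartan) fact that $F(x)=x^2$ solves (\ref{6ode}), which your direct check verifies since every monomial in the equation contains a derivative of order at least three. Nothing is missing; your observation that no genuine computation is needed is precisely why the paper omits the argument.
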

Differentiating the above relation twice, we find that
\begin{align*}
\tilde F''(x)=\frac{1}{(cx+d)^3}F''(\tilde x).
\end{align*}
Again the right hand side denotes differentiation with respect to $\tilde x$. For any $F(x)$ with weight $-1$ satisfying the 6th order ODE (\ref{6ode}), we identify the solutions
 \[
 \left(x,\tilde F(x)\right)=\left(x,(cx+d)F(g \cdot x)\right)\sim\left(g \cdot x,F(g\cdot x)\right).
 \]
 
We define $SL_2(\C)$ equivalent solutions to the generalised Chazy's equation in the following fashion (see also \cite{co96}).
\begin{Definition}\label{def1}
Two solutions $y(\tilde x)$ and $\tilde y(x)$ to the generalised Chazy equation are said to be {\bf{equivalent}} if there exists an element $g$ of $SL_2(\C)$ such that $\tilde x=g\cdot x$ and (\ref{sl2}) holds for $y(g\cdot x)$ and $\tilde y(x)$. 
\end{Definition}
From this we can identify the solutions
 \[
 \left(x,\tilde y(x)\right)\sim\left(\tilde x,(cx+d)^2\tilde y(x)+6c(cx+d)\right)=(\tilde x, y(\tilde x)).
 \]
It is well known that $F(x)=x^m$ for $m \in \{-1,\frac{1}{3},\frac{2}{3},2\}$ solves equation (\ref{6ode}) (see \cite{cartan1910}). Let us investigate how these solutions are equivalent solutions under $SL_2(\C)$.
Using that $y(x)=2\frac{\der}{\der x}\log(F''(x))$, this gives $y(x)=\frac{6}{x}$, $y(x)=-\frac{10}{3x}$, $y(x)=-\frac{8}{3x}$ and $y(x)=0$ as solutions to the $k=\frac{2}{3}$ equation. The solution $y=-\frac{8}{3(x+C)}-\frac{10}{3(x+B)}$ for constants $B$, $C$ was further obtained following \cite{chazy1} and \cite{chazy2}. This corresponds to $F(x)=(x+B)^{\frac{1}{3}}(x+C)^{\frac{2}{3}}$.

\begin{proposition}\label{001}
The solutions to (\ref{gc0}) for the parameter $k=\frac{2}{3}$ given by
$y=-\frac{10}{3x}$, $y=-\frac{8}{3x}$ and $y=-\frac{8}{3(x+C)}-\frac{10}{3(x+B)}$ are equivalent in the sense of Definition \ref{def1}.
\end{proposition}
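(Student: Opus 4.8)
The plan is to transfer the problem from the three $y$-solutions, which I denote $y_1=-\frac{10}{3x}$, $y_2=-\frac{8}{3x}$ and $y_3=-\frac{8}{3(x+C)}-\frac{10}{3(x+B)}$, to the three corresponding solutions of the sixth order equation (\ref{6ode}), where the equivalence becomes transparent. Each $y_i$ arises as $y=2\frac{\der}{\der x}\log(F''(x))$ from a weight $-1$ solution $F$ of (\ref{6ode}): explicitly $y_1$ comes from $F_1(x)=x^{1/3}$, $y_2$ from $F_2(x)=x^{2/3}$ and $y_3$ from $F_3(x)=(x+B)^{1/3}(x+C)^{2/3}$, as recorded above. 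The first point I would establish is that the weight $-1$ action $F\mapsto\tilde F(x)=(cx+d)F(g\cdot x)$ induces exactly the $SL_2(\C)$ action (\ref{sl2}) on the associated $y$. This follows by logarithmically differentiating the identity $\tilde F''(x)=(cx+d)^{-3}F''(\tilde x)$ from the Corollary and using $\frac{\der\tilde x}{\der x}=(cx+d)^{-2}$; the term $-3\log(cx+d)$ then produces precisely the inhomogeneous piece $-\frac{6c}{cx+d}$ appearing in (\ref{sl2}). Consequently two $y$-solutions are equivalent in the sense of Definition \ref{def1} as soon as the underlying $F$-solutions are related by the weight $-1$ action.

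With this reduction in hand, I would exhibit the relevant group elements directly. Applying the weight $-1$ action to $F_1=x^{1/3}$ gives $\tilde F_1(x)=(cx+d)\bigl(\frac{ax+b}{cx+d}\bigr)^{1/3}=(ax+b)^{1/3}(cx+d)^{2/3}$. Choosing $a=1$, $b=B$, $c=(C-B)^{-1}$, $d=C(C-B)^{-1}$, which satisfies $ad-bc=1$ whenever $B\neq C$, turns the two linear factors into $(x+B)$ and $(x+C)$, so that $\tilde F_1$ is a nonzero constant multiple of $F_3$. Since $y=2\frac{\der}{\der x}\log(F'')$ is unchanged under $F\mapsto\lambda F$ for constant $\lambda$, this yields $y_1\sim y_3$. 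An entirely analogous choice applied to $F_2=x^{2/3}$, now matching the factors against $(x+C)^{2/3}(x+B)^{1/3}$, gives $y_2\sim y_3$.

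Finally, because (\ref{sl2}) realises a genuine action of the group $SL_2(\C)$ on the solution set, the relation of Definition \ref{def1} is reflexive, symmetric and transitive; hence $y_1\sim y_3$ and $y_2\sim y_3$ force $y_1\sim y_2$, which completes the argument.

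I do not expect a serious obstacle here: the computation is elementary once the $F$-level reformulation is set up. The two points that require care are that the match of $F$-solutions holds only up to a nonzero scalar, which I must note leaves $y$ invariant, and that the normalisation $ad-bc=1$ presupposes $B\neq C$ (when $B=C$ the function $F_3$ collapses to a linear polynomial with $F_3''=0$ and $y_3$ degenerates, so this case is excluded). As a cross-check I would, if desired, verify $y_1\sim y_3$ directly at the level of $y$: substituting $y_1=-\frac{10}{3x}$ into (\ref{sl2}) with the above $g$ and simplifying by partial fractions reproduces $-\frac{8}{3(x+C)}-\frac{10}{3(x+B)}$.
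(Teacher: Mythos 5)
Your proof is correct, but it takes a genuinely different route from the paper's. The paper argues entirely at the level of $y$: it applies (\ref{sl2}) with an arbitrary $g$ to $y=-\frac{8}{3x}$ to produce the two-pole solution $-\frac{8}{3(x+\frac{b}{a})}-\frac{10}{3(x+\frac{d}{c})}$, then exhibits an explicit matrix $g_1$ carrying $-\frac{8}{3(x+e)}-\frac{10}{3(x+f)}$ (with $e\neq f$) to $-\frac{10}{3x}$, and a third matrix $g_2$ closing the cycle back to $-\frac{8}{3x}$. You instead lift the problem to solutions $F$ of the 6th-order ODE (\ref{6ode}), prove the compatibility lemma that the weight $-1$ action $F\mapsto (cx+d)F(g\cdot x)$ induces exactly (\ref{sl2}) on $y=2\frac{\der}{\der x}\log F''$ (your logarithmic differentiation of $\tilde F''(x)=(cx+d)^{-3}F''(\tilde x)$ is the correct and complete verification), and then read off the group elements from the factorisation of $F$: matching $(ax+b)^{1/3}(cx+d)^{2/3}$ against $(x+B)^{1/3}(x+C)^{2/3}$ up to a scalar, which $y$ does not see. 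This inverts the paper's logical order: the paper deduces the equivalence of the $F$-solutions $x^{1/3}$, $x^{2/3}$, $(x+B)^{\frac{1}{3}}(x+C)^{\frac{2}{3}}$ as a \emph{consequence} of the proposition, whereas you derive the proposition \emph{from} the $F$-level matching. What your route buys is transparency and reusability — the matrices are forced by the linear factors, no partial-fraction manipulation is needed, and the induced-action lemma is of independent use; what the paper's route buys is self-containedness, since it never invokes (\ref{6ode}) nor the fact that (\ref{sl2}) defines a bona fide group action (which your final step, deducing $y_1\sim y_2$ from $y_1\sim y_3$ and $y_2\sim y_3$, does require — it is standard, being the weight-3 slash action on $F''$, but strictly speaking it is an ingredient you use and the paper avoids by exhibiting the third matrix $g_2$ explicitly). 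Your caveat $B\neq C$ is the exact analogue of the paper's hypothesis $e\neq f$, and your cross-check computation does reproduce $-\frac{8}{3(x+C)}-\frac{10}{3(x+B)}$, so both arguments are sound.
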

\begin{proof}
Applying an arbitrary $g=\bigl(\begin{smallmatrix}a & b\\ c& d \end{smallmatrix}\bigr) \in SL_2(\C)$ to the solution given by $y=-\frac{8}{3x}$, we obtain  
$\tilde y=-\frac{8}{3(x+\frac{b}{a})}-\frac{10}{3(x+\frac{d}{c})}$. Applying $g_1=\Bigl(\begin{smallmatrix} -e c & \frac{f}{c (e-f)}\\ c& -\frac{1}{e c}(1+ \frac{f}{e-f}) \end{smallmatrix}\Bigr) \in SL_2(\C)$ to $\tilde y=-\frac{8}{3(x+e)}-\frac{10}{3(x+f)}$ with $e\neq f$, we obtain
$\tilde{\tilde y}=-\frac{10}{3x}$. To get back to $y=-\frac{8}{3x}$, we use the transformation given by 
$g_2=\bigl(\begin{smallmatrix} 0& b\\ -\frac{1}{b}& 0 \end{smallmatrix}\bigr) \in SL_2(\C)$.
\end{proof}
As a consequence of Proposition \ref{001}, we see that 
the solutions given by $F(x)=(x+B)^{\frac{1}{3}}(x+C)^{\frac{2}{3}}$, $x^{\frac{2}{3}}$ and $x^{\frac{1}{3}}$ 
are equivalent to one another by this $SL_2$ action. 

\begin{proposition}\label{002}
The solutions to (\ref{gc0}) for the parameter $k=\frac{2}{3}$ given by
$y=0$ and $y=-\frac{6}{x}$ are equivalent in the sense of Definition \ref{def1}.
\end{proposition}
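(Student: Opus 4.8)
The plan is to produce a single $g \in SL_2(\C)$ that carries the trivial solution $y \equiv 0$ to the solution $y = -\tfrac{6}{x}$ through the substitution rule (\ref{sl2}). Because the transformation (\ref{sl2}) sends solutions of (\ref{gc0}) to solutions (as established at the start of Section \ref{sl2-section}), it is enough to realise one of the two listed solutions as the $SL_2(\C)$-image of the other.

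First I would feed the constant solution $y(\tilde x) \equiv 0$ into (\ref{sl2}). The leading term $\tfrac{1}{(cx+d)^2} y(\tilde x)$ vanishes identically, leaving
\[
\tilde y(x) = -\frac{6c}{cx+d}.
\]
Hence the whole orbit of $y \equiv 0$ under $SL_2(\C)$ consists of the M\"obius functions $-\tfrac{6c}{cx+d}$, and the problem collapses to choosing the entries $c,d$ so that this expression coincides with $-\tfrac{6}{x}$. Matching $-\tfrac{6c}{cx+d}$ against $-\tfrac{6}{x}$ forces $cx+d = cx$, i.e.\ $d=0$ (with $c \neq 0$); the value of $c$ then cancels and the image is exactly $-\tfrac{6}{x}$.

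It remains only to respect the normalisation $ad-bc=1$. With $d=0$ this reads $bc=-1$, which is trivially arranged; for definiteness I would take $g = \bigl(\begin{smallmatrix} 0 & -1 \\ 1 & 0 \end{smallmatrix}\bigr)$, so that $\tilde x = -\tfrac{1}{x}$ and (\ref{sl2}) returns $\tilde y(x) = -\tfrac{6}{x}$. This exhibits the required $g$ and proves the equivalence in the sense of Definition \ref{def1}. There is essentially no obstacle here: the only point worth noticing is that the inhomogeneous term $-\tfrac{6c}{cx+d}$ of (\ref{sl2}) is itself a solution for every $g$, being the image of $0$, and that its pole can be slid to the origin precisely by setting $d=0$; keeping $ad-bc=1$ is the sole remaining bookkeeping step.
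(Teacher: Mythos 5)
Your proposal is correct and follows essentially the same route as the paper: both apply $g=\bigl(\begin{smallmatrix} 0 &-1\\1&0\end{smallmatrix}\bigr)$ via (\ref{sl2}) to the zero solution, where the term $\frac{1}{(cx+d)^2}y(\tilde x)$ vanishes and the inhomogeneous term $-\frac{6c}{cx+d}$ becomes exactly $-\frac{6}{x}$. Your additional observation that the full $SL_2(\C)$-orbit of $y\equiv 0$ consists of the functions $-\frac{6c}{cx+d}$, so that matching poles forces $d=0$, is a nice way of seeing why this choice of $g$ is essentially the only one, but the substance of the argument is identical.
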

\begin{proof}
This is clear from applying $g=\bigl(\begin{smallmatrix} 0 &-1\\1&0\end{smallmatrix}\bigr)$ to the zero solution and its inverse $g^{-1}=\bigl(\begin{smallmatrix} 0 &1\\-1&0\end{smallmatrix}\bigr)$ to $y=-\frac{6}{x}$ to get the zero solution. 
\end{proof}

\section{Schwarz functions and equivalent solutions under $SL_2(\C)$}\label{schwarz-section}

The solutions to equation (\ref{gc0}) can be found using the techniques described in \cite{acht} and \cite{ach}. They can be expressed in terms of logarithmic derivatives involving Schwarz triangle functions. Chazy finds the solutions in \cite{chazy1} and \cite{chazy2} by introducing the auxillary parameter $s$ and treating $x=\frac{z_2(s)}{z_1(s)}$ and taking $y=6 \frac{\der}{\der x}\log z_1(s)$. The  generalised equations are then satisfied if and only if $z_1(s)$, $z_2(s)$ are linearly independent solutions to the hypergeometric differential equation 
\begin{align}\label{hypergeom}
s(1-s) z_{ss}+(c-(a+b+1)s) z_s-a bz=0. 
\end{align} 
Here the subscript denotes differentiation with respect to $s$ and $a$, $b$ and $c$ are constants that depend on $k$.
The solutions can also be written as a closed nonlinear system of first order autonomous differential equations, called the generalised Darboux-Halphen system. From the first order system, the equations can be transformed to a Schwarzian type equation with potential term $V(s)$. The solutions are given precisely by Schwarz triangle functions. Let prime denote differentiation with respect to $x$.
\begin{Definition}
A {\bf Schwarz triangle function} $s(\al,\beta,\ga,x)$ is a solution to the following third order non-linear differential equation \[\{s,x\}+\frac{(s')^2}{2}V(s)=0\] where 
$\{s,x\}=\frac{\der}{\der x}\left(\frac{s''}{s'}\right)-\frac{1}{2}\left(\frac{s''}{s'}\right)^2$ is the Schwarzian derivative and
\[V(s)=\frac{1-
\beta^2}{s^2}+\frac{1-\ga^2}{(s-1)^2}+\frac{\beta^2+\ga^2-\al^2-1}{s(s-1)}\] is the potential. 
\end{Definition}
A Schwarz triangle function determines through its inverse a mapping from the complex upper half plane $\H=\{z\in \C: \Im(z)>0\}$
to a spherical, planar or hyperbolic triangle $\De$ with angles between edges given by $(\al \pi, \beta \pi, \ga \pi)$. The edges of the triangle are given by circular arcs. The inverse map $x:\H \to \De$ is single valued and meromorphic given by
\begin{align}\label{quot}
x(s)=\frac{{}_2F_1(a-c+1,b-c+1;2-c;s)}{{}_2F_1(a,b;c;s)}s^{1-c}.
\end{align}
The image of $0$, $\infty$ and $1$ under $s$ are the vertices of the triangle with one vertex at the origin $x(0)=0$ and another $x(1)$ on the real line (identifying the domain of the triangle as a subset of the complex plane). Here we have 
\begin{align*}
a&=\frac{1}{2}(1-\al-\beta-\ga),\\
b&=\frac{1}{2}(1+\al-\beta-\ga),\\
c&=1-\beta.
\end{align*}

In the solutions to (\ref{gc0}), $x$ is given by the quotient of linearly independent solutions to (\ref{hypergeom}). The general solution to (\ref{hypergeom}) is given by
$
\al z_1(s)+\beta z_2(s)
$
where $z_1$, $z_2$ are linearly independent. We form the quotient $x=\frac{z_2}{z_1}$. If we take a different linear combination instead with 
\begin{align*}
\tilde x=\frac{\beta z_1-\de z_2}{-\al z_1+\ga z_2}=\frac{\beta-\de \frac{z_2}{z_1}}{-\al+\ga \frac{z_2}{z_1}}
=\frac{\beta-\de x}{-\al+\ga x},
\end{align*}
then we find
\begin{align*}
x=\frac{\al \tilde x+\beta}{\ga \tilde x+\de}.
\end{align*}
In other words, if we restrict to $\al$, $\beta$, $\ga$, $\de$ such that $\al \de-\beta \ga=1$, then $x=g\cdot \tilde x$ and $\tilde x=(g^{-1})\cdot x$ for $g\in SL_2$. Hence $SL_2$ equivalent solutions to Chazy's equation are determined by  quotient $\frac{z_2}{z_1}$, and thus are completely determined by the Schwarz function $s$. Every distinct Schwarz function therefore gives rise to $SL_2$ equivalent solutions as in Definition \ref{def1}. We will henceforth just consider the quotient $x=\frac{z_2}{z_1}$ in our computations, modulo constants that agree with the expression (\ref{quot}).

Our goal now is to present the various $(x(s), y(s))$, parametrised by the distinct Schwarz functions $s$ that are found using the general method to solve Chazy's equation \cite{acht}, \cite{ach}. Let us denote
\begin{align*}
\Om_1&=-\frac{1}{2}\frac{\der}{\der x}\log\frac{s'}{s(s-1)},\\
\Om_2&=-\frac{1}{2}\frac{\der}{\der x}\log\frac{s'}{s-1},\\
\Om_3&=-\frac{1}{2}\frac{\der}{\der x}\log\frac{s'}{s}.
\end{align*}
Then $y=-2(\Om_1+\Om_2+\Om_3)$ solves (\ref{gc0}) when $(\al,\beta, \ga)=(\frac{2}{k},\frac{2}{k},\frac{2}{k})$ or $(\frac{2}{k},\frac{1}{3},\frac{1}{3})$ and its cyclic permutations. 
Similarly, we find that
$y=-\Om_1-2\Om_2-3\Om_3$ solves (\ref{gc0}) when $(\al,\beta, \ga)=(\frac{1}{k},\frac{1}{3},\frac{1}{2})$, $(\frac{1}{k},\frac{2}{k},\frac{1}{2})$ or $(\frac{1}{k},\frac{1}{3},\frac{3}{k})$.
Also, $y=-4\Om_1-\Om_2-\Om_3$ solves (\ref{gc0}) when $(\al,\beta, \ga)=(\frac{4}{k},\frac{1}{k},\frac{1}{k})$ or $(\frac{2}{3},\frac{1}{k},\frac{1}{k})$. Each of these values of $(\al, \beta, \ga)$ determine the Schwarz triangle function and the values of $(a,b,c)$ in (\ref{quot}). In the next four sections we compute these for $k=2$, $k=\frac{2}{3}$, $k=3$ and $k=\frac{3}{2}$.

\section{Generalised Chazy equation with $k=2$ and its Schwarz functions }\label{gen2}

In this section we give the general solution to (\ref{gc0}) for $k=2$ and the Schwarz functions that solves the equation in Table \ref{table1}. 

\begin{theorem}
The general solution to the generalized Chazy equation with $k=2$ over the Riemann surface $\P^1=\C\cup\{\infty\}$ is given by 
\begin{align*}
y(x)=-2\left(\frac{1}{x-x_1}+\frac{1}{x-x_2}+\frac{1}{x-x_3}\right)
\end{align*}
and depends on 3 arbitrary points $x_1$, $x_2$, $x_3$ on the Riemann surface. 
\end{theorem}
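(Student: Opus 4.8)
The plan is to exploit the linearisation already recorded in the text: with $k=2$ the auxiliary function $f(x)=\exp\bigl(-\tfrac12\int y\,\der x\bigr)$ (i.e.\ the case $\tfrac{2}{k-6}=-\tfrac12$) satisfies equation (\ref{intc}), and both the coefficient $(k-2)$ and the coefficient $\tfrac{3k(k-2)}{2(k+6)}$ vanish at $k=2$, so (\ref{intc}) collapses to $ff''''=0$. Since $f$ is an exponential it is locally non-vanishing, hence $f''''=0$ on any simply connected domain avoiding the poles of $y$. The general solution of $f''''=0$ is a polynomial of degree at most three, so $f$ is (the analytic continuation of) a cubic $f(x)=c\,(x-x_1)(x-x_2)(x-x_3)$.

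First I would make the correspondence between $y$ and $f$ precise and, crucially, reversible. Inverting the substitution gives $y=\tfrac{k-6}{2}\,\tfrac{f'}{f}=-2\,\tfrac{\der}{\der x}\log f$, a relation that is invariant under rescaling $f\mapsto \la f$; this rescaling freedom is exactly the integration constant hidden in $\int y\,\der x$. Because the change of dependent variable $y=-2(\log f)'$ is a local diffeomorphism of the relevant jet spaces away from the zeros of $f$, a function $y$ solves (\ref{gc0}) for $k=2$ if and only if the associated $f$ solves $ff''''=0$; in particular every non-zero cubic $f$ produces a solution. Computing the logarithmic derivative of the factored cubic then yields
\[
y=-2\,\frac{f'}{f}=-2\left(\frac{1}{x-x_1}+\frac{1}{x-x_2}+\frac{1}{x-x_3}\right),
\]
which is the asserted form; the three roots are precisely the three poles of $y$, each with residue $-2$.

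To see that this is the \emph{general} solution, I would count parameters. The Chazy equation is third order, so its solution space is three-dimensional; on the other side, cubics form a four-dimensional space but $y=-2f'/f$ depends only on $f$ up to scale, so the effective parameter space is its projectivisation, again three-dimensional. Two cubics give the same $y$ precisely when they are proportional, so the assignment $\{x_1,x_2,x_3\}\mapsto y$ is a bijection from unordered triples of points onto the solution set. To phrase the statement over $\P^1$ rather than $\C$, I would invoke the $SL_2(\C)$ action of Section \ref{sl2-section}: under $x\mapsto g\cdot x$ solutions map to solutions and the roots transform as points of $\P^1$, so a drop in the degree of $f$ (the coefficients of $x^3$, then $x^2$, vanishing) corresponds exactly to one, then two, of the $x_i$ moving to $\infty$, the term $\tfrac{1}{x-x_i}$ disappearing in the limit. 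This makes the family naturally indexed by three arbitrary points of $\P^1$.

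The main obstacle I anticipate is not the algebra but making the local-to-global passage airtight: $f=\exp(-\tfrac12\int y)$ is nowhere zero as written, yet the cubic we want has zeros exactly at the poles of $y$. The resolution is that the exponential identity holds only on simply connected pole-free domains, where $f$ is a nonvanishing cubic; one then continues $f$ analytically across $\C$, where it acquires the simple zeros at the $x_i$, and recovers $y=-2f'/f$ as a global meromorphic function with the prescribed simple poles. I would also want to confirm that the substitution genuinely converts (\ref{gc0}) into a nonzero multiple of the left-hand side of (\ref{intc}) in \emph{both} directions (the text asserts only the forward implication), since that single computation is what certifies that no solutions are lost.
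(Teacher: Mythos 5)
Your proposal is correct and follows essentially the same route as the paper: both linearise via the substitution $y=-2f'/f$ (equivalently $f=\exp(-\tfrac12\int y\,\der x)$, which reduces (\ref{intc}) to $ff''''=0$), solve $f''''=0$ by a cubic, and read off $y$ as the logarithmic derivative, with the $a=0$ degenerations interpreted as roots moving to $\infty$ on $\P^1$. Your extra care about reversibility is settled by the single identity that under this substitution the Chazy left-hand side equals $-2f''''/f$, which is exactly the equivalence the paper invokes.
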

\begin{proof}
We make the following observation over the complex plane $\C$. Under the substitution $y=\frac{-2 f'}{f}$ for $f$ non-zero, the generalised Chazy equation with $k=2$ is equivalent to the linear 4th order ODE $f''''=0$. The solution to $f''''=0$ is given by the cubic polynomial $f=ax^3+3bx^2+6cx+d$ and therefore the general solution to the $k=2$ Chazy equation over $\C$ is given by its logarithmic derivative
\begin{align*}
y=-\frac{6 a x^2+12 b x+12 c}{a x^3+3b x^2+6c x+d}
\end{align*}
where $a$, $b$, $c$, $d$ are constants of integration. For $a\neq 0$, we can factorize $f=a(x-x_1)(x-x_2)(x-x_3)$ over $\C$, and we obtain
$y=-2\left(\frac{1}{x-x_1}+\frac{1}{x-x_2}+\frac{1}{x-x_3}\right)$.
The general solution for $a \neq 0$ therefore depends on three generic points on $\C$. If we include the point at infinity, we allow solutions with $a=0$ of the form say $y=-2\left(\frac{1}{x-x_1}+\frac{1}{x-x_2}\right)$.
\end{proof}

The above fact has already been observed by Chazy in pages 346-347 of \cite{chazy2}.
Let us tabulate in Table \ref{table1} the Schwarz triangle functions that show up in the solutions to the Chazy equation when $k=2$. 
In the first column, we give the angles $(\al,\beta,\ga)$ of the spherical triangle. 
This determines the values $(a,b,c)=(\frac{1}{2}(1-\al-\beta-\ga),\frac{1}{2}(1+\al-\beta-\ga),1-\beta)$ in the hypergeometric equation (\ref{hypergeom}) and in the second column we give
\begin{align*}
x=\frac{{}_2F_1(a-c+1,b-c+1;2-c;s)}{{}_2F_1(a,b;c;s)}s^{1-c}.
\end{align*} 
In the third column, we invert the second column to present the Schwarz function, with a branch cut chosen for the functions in the last three rows. The entry in the second column of the fifth row requires the solution of a quartic polynomial determined by $x(r)$ on the second column. This same Schwarz function will show up again in the solutions to the $k=\frac{2}{3}$ equation. Finally in the last column we present $y(x)$ as determined by the formulas for $s(x)$ in the fourth column. Here $w=e^{\frac{2\pi i}{3}}$ denotes the cube root of unity. The formulas that give $y$ are discussed and presented in \cite{r16}.  
The series expansion of $x(s)$ around a regular neighbourhood
can be computed and can be checked to see if it agrees with the function presented in Table \ref{table1}.

\begin{table}[h]
 \begin{tabular}{|c|c|c|c|} \hline
$(\al,\beta,\ga)$ & $x(s)$ &$s(x)$ & $y(x)$\\ \hline\hline
$(1,1,1)$ & $\frac{s}{1-s}$ & $\frac{x}{x+1}$ & $-\frac{2}{x}-\frac{2}{x+1}$
 \\ \hline
$(1,\frac{1}{3},\frac{1}{3})$ & $\left(\frac{s}{1-s}\right)^{\frac{1}{3}}$& $\frac{x^3}{x^3+1}$ & $-\frac{2}{x+1}-\frac{2}{x+w}-\frac{2}{x+w^2}$ \\ \hline

$\left(\frac{1}{2 }, \frac{1}{3},\frac{1}{2}\right)$ & $2^{\frac{2}{3}}\left(\frac{1-r}{1+r}\right)^{\frac{1}{3}}$& $\frac{16x^3}{(x^3+4)^2}$ & $-\frac{6 x^2}{x^3+4}$\\
& and $s=1-r^2$ & & 
\\ \hline

$\left(\frac{1}{2},1, \frac{1}{2}\right)$  & $\frac{2}{\sqrt{1-s}}-2$ &  $1-\frac{4}{(x+2)^2}$ & $-\frac{2}{x}-\frac{2}{x+2}-\frac{2}{x+4}$ \\ \hline

$\left(\frac{1}{2},\frac{1}{3}, \frac{3}{2}\right)$ & $\frac{1}{2^{\frac{1}{3}}}\left(\frac{1-r}{1+r}\right)^{\frac{1}{3}}\left(\frac{3+r}{3-r}\right)$& Roots of the quartic polynomial  & $-\frac{3\cdot2^{\frac{1}{3}}}{8}\left(\frac{r+1}{r-1}\right)^{\frac{1}{3}}\frac{r-3}{r^3}(r-1)(r+3)^2$  \\

& and $s=1-r^2$ & $\left(\frac{1}{2}-x^3\right)r^4+(8 x^3+4)r^3$ &$=-\frac{12x^2}{2x^3-1}$\\

& &$+(9-18x^3)r^2+27 x^3-\frac{27}{2}=0$ & 
\\ \hline

$\left(2,\frac{1}{2},\frac{1}{2}\right)$  & $\frac{\sqrt{s(1-s)}}{1-2s}$& $\frac{1}{2}(1\pm \frac{1}{\sqrt{4x^2+1}})$ & $-\frac{2}{x}-\frac{16 x}{4x^2+1}$ \\ \hline

$\left(\frac{2}{3}, \frac{1}{2}, \frac{1}{2}\right)$  & $\frac{3}{2}\frac{\sin(r)}{\cos(r)}$ & $\frac{1}{2}\left(1+\frac{9 (4x^2-3)}{(4x^2+9)^{\frac{3}{2}}}\right)$& $-\frac{2}{x}-\frac{16x}{4x^2-27}$ \\
& and $s=\sin^2(\frac{3}{2}r)$ & & 

\\ \hline
  \end{tabular}
 \caption{Schwarz functions for $k=2$}
\label{table1}
 \end{table}
 
Up to fractional linear transformations, the identity map $x=s$ is given by $s(1,1,1,x)$. This function appears in the first row of Table \ref{table1}. The upper half plane is mapped to the hemisphere of $\S^2$ bound by a great circle and the vertices of the triangle are three points lying on the great circle. Geometrically speaking then the spherical triangles make sense only when $(\al+\beta+\ga)\pi \leq 3\pi$. The occurrence of angles adding up to greater than $3 \pi$ requires us to think of ``triangles" overlapping onto itself (or a branched cover or folded triangle) to realise such exaggeratedly large angles.

The function appearing in the third row of Table \ref{table1} given by $s(\frac{1}{2},\frac{1}{3},\frac{1}{2},x)$ appears in Schwarz's list \cite{schwarz} and has dihedral symmetry. Despite the complicated looking formula for $x$ in the fifth row corresponding to $s(\frac{1}{2},\frac{1}{3},\frac{3}{2},x)$, it is straightforward to determine $y$ in terms of $x$.
\begin{proposition}
Let $s=s(\frac{1}{2},\frac{1}{3},\frac{3}{2},x)$ Then the solution to (\ref{gc0}) with $k=2$ is given by
\begin{align*}
y=\frac{\der}{\der x}\log \frac{(s')^3}{s^2(s-1)^{\frac{3}{2}}}=-\frac{12x^2}{2x^3-1}.
\end{align*}
\end{proposition}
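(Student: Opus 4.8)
The plan is to prove the two equalities separately. For the first, I would observe that the angle triple $(\frac{1}{2},\frac{1}{3},\frac{3}{2})$ is precisely $(\frac{1}{k},\frac{1}{3},\frac{3}{k})$ evaluated at $k=2$, so the relevant solution is the one furnished by the formula $y=-\Om_1-2\Om_2-3\Om_3$ recorded in Section~\ref{schwarz-section}. Substituting the definitions of $\Om_1,\Om_2,\Om_3$ and collecting the three logarithmic derivatives into a single one, the exponent of $s'$ accumulates to $\frac{1}{2}+1+\frac{3}{2}=3$, the exponent of $s$ in the denominator to $\frac{1}{2}+\frac{3}{2}=2$, and that of $s-1$ to $\frac{1}{2}+1=\frac{3}{2}$, giving at once
\[
y=\frac{\der}{\der x}\log\frac{(s')^3}{s^2(s-1)^{\frac{3}{2}}}.
\]

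For the second equality I would avoid differentiating $s$ twice and instead argue by proportionality. Since $-\frac{12x^2}{2x^3-1}=\frac{\der}{\der x}\log(2x^3-1)^{-2}$, it suffices to show that $\frac{(s')^3}{s^2(s-1)^{3/2}}$ and $(2x^3-1)^{-2}$ differ only by a multiplicative constant; equivalently, that the product
\[
\frac{(s')^3}{s^2(s-1)^{\frac{3}{2}}}\,(2x^3-1)^2
\]
has vanishing logarithmic derivative, i.e.\ is constant. This reduces a differential identity to an algebraic one, and makes the branch chosen for $(s-1)^{3/2}$ irrelevant, since it only affects that constant.

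To verify constancy I would use the rational parametrisation from the fifth row of Table~\ref{table1}, namely $s=1-r^2$ together with $x=2^{-\frac{1}{3}}\bigl(\frac{1-r}{1+r}\bigr)^{\frac{1}{3}}\frac{3+r}{3-r}$. By the chain rule $s'=-2r/x_r$ with $x_r=\der x/\der r$, so that $(s')^3$, $s^2=(1-r^2)^2$ and $(s-1)^{3/2}\propto r^3$ are all explicit in $r$. The computation then hinges on two identities: the numerator simplification
\[
(1-r)(3+r)^3-(1+r)(3-r)^3=-16r^3,
\]
which gives $2x^3-1=-16r^3/[(1+r)(3-r)^3]$, and the logarithmic derivative $x_r/x=-16r^2/[3(1-r^2)(9-r^2)]$ obtained by differentiating $\log x$ termwise. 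Substituting $x_r^3$ and $2x^3-1$ into the displayed product and using $1-r^2=(1-r)(1+r)$ and $9-r^2=(3-r)(3+r)$, every factor of $r$, of $1\pm r$ and of $3\pm r$ cancels, leaving a nonzero constant; this yields $y=-\frac{12x^2}{2x^3-1}$.

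The main obstacle is essentially bookkeeping: the cube roots appearing in $x$ make the raw expressions look intractable. The two devices that defuse this are passing to logarithmic derivatives (which linearises the fractional exponents) and reformulating the target as the constancy of a single product, so that one only checks a rational identity in $r$ rather than computing second derivatives. The factorisation $(1-r)(3+r)^3-(1+r)(3-r)^3=-16r^3$ is the crucial step that forces the exact cancellation, and once it is in place the remaining verification is mechanical and readily confirmed in MAPLE.
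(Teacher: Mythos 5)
Your proof is correct, and it reaches the result by a noticeably different organisation of the same raw material. The first half coincides with what the paper leaves implicit: $(\tfrac{1}{2},\tfrac{1}{3},\tfrac{3}{2})$ is $(\tfrac{1}{k},\tfrac{1}{3},\tfrac{3}{k})$ at $k=2$, and collecting $y=-\Om_1-2\Om_2-3\Om_3$ into one logarithmic derivative gives exactly $\frac{\der}{\der x}\log\frac{(s')^3}{s^2(s-1)^{3/2}}$, as per Section \ref{schwarz-section}; your exponent bookkeeping ($3$, $2$, $\tfrac{3}{2}$) checks out. In the second half you and the paper share the same two pillars, namely the parametrisation $s=1-r^2$, $x=2^{-1/3}\bigl(\frac{1-r}{1+r}\bigr)^{1/3}\frac{3+r}{3-r}$ from Table \ref{table1} and the factorisation $(1-r)(3+r)^3-(1+r)(3-r)^3=-16r^3$, which is precisely the paper's identity $\frac{1}{2x^3-1}=\frac{(r-3)^3(r+1)}{16r^3}$ in disguise; but the endgame differs. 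The paper differentiates: it computes $\der x$ in terms of $\der r$, evaluates $y=r'\frac{\der}{\der r}\log\frac{(-2r'r)^3}{(1-r^2)^2(-r^2)^{3/2}}$ to obtain the closed form $y(r)=-\frac{3\cdot 2^{1/3}}{8}\frac{1}{r^3}(r-3)(r+3)^2(r-1)^{2/3}(r+1)^{1/3}$ (the expression recorded in Table \ref{table1}), and then matches this against $-\frac{12x^2}{2x^3-1}$ rewritten in $r$. You antidifferentiate instead: since $-\frac{12x^2}{2x^3-1}=\frac{\der}{\der x}\log(2x^3-1)^{-2}$, it suffices that the product $\frac{(s')^3}{s^2(s-1)^{3/2}}(2x^3-1)^2$ be constant, which after substitution is a purely rational identity in $r$ (it evaluates to $27$ up to a unimodular branch factor). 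Your route buys robustness: no differentiation of fractional powers, and the branch of $(s-1)^{3/2}$ is absorbed into the constant rather than tracked, whereas the paper must carry factors like $(r-1)^{2/3}(r+1)^{1/3}$ through its computation. The paper's route buys the explicit intermediate formula $y(r)$, which it wants anyway for the fourth column of Table \ref{table1}. Both identities you invoke are correct (including $x_r/x=-16r^2/[3(1-r^2)(9-r^2)]$), so the argument is complete as stated.
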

\begin{proof}
From the parametrisation
\begin{align*}
x=\frac{1}{2^{\frac{1}{3}}}\left(\frac{1-r}{1+r}\right)^{\frac{1}{3}}\left(\frac{3+r}{3-r}\right)
\end{align*}
with $s=1-r^2$, we obtain 
\begin{align*}
\der x=-\frac{8}{3}\frac{4^{\frac{1}{3}} r^2}{(r-3)^2(r-1)^{\frac{2}{3}}(r+1)^{\frac{4}{3}}}\der r=\frac{1}{r'}\der r.
\end{align*}
The corresponding formula for $y$ that solves (\ref{gc0}) then gives
\begin{align*}
y=\frac{\der}{\der x}\log\frac{(s')^3}{s^2 (s-1)^{\frac{3}{2}}}=r'\frac{\der}{\der r}\log\frac{(-2r'r)^3}{(1-r^2)^2 (-r^2)^{\frac{3}{2}}}\\
=-\frac{3 \cdot 2^{\frac{1}{3}}}{8}\frac{1}{r^3}(r-3)(r+3)^2(r-1)^{\frac{2}{3}}(r+1)^{\frac{1}{3}}.
\end{align*}
From the formula for $x$, we find
\begin{align*}
\frac{1}{2x^3-1}=\frac{(r-3)^3 (r+1)}{16r^3},
\end{align*}
and therefore
\begin{align*}
y=&-\frac{3}{4}\frac{1}{2^{\frac{2}{3}}}\left(\frac{1-r}{1+r}\right)^{\frac{2}{3}}\left(\frac{3+r}{3-r}\right)^2\frac{(r-3)^3 (r+1)}{r^3}\\
=&-12\frac{1}{2^{\frac{2}{3}}}\left(\frac{1-r}{1+r}\right)^{\frac{2}{3}}\left(\frac{3+r}{3-r}\right)^2\frac{(r-3)^3 (r+1)}{16r^3}\\
=&-\frac{12x^2}{2x^3-1}.
\end{align*}
\end{proof}

The maps between the Schwarz functions for the $k=2$ case are presented in Figure \ref{map1}. We use the notation adopted in \cite{hyper}, where the number over the arrow denotes the degree of the algebraic transformation. These transformations are classified in \cite{hyper}. 

\begin{figure}[h]
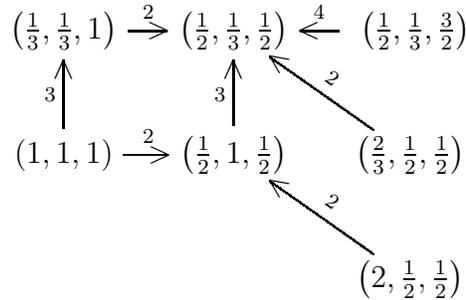

\begin{diagram}
\left(\tfrac{1}{3},\tfrac{1}{3},1\right)  &\rTo^{2} &\left(\tfrac{1}{2},\tfrac{1}{3},\tfrac{1}{2}\right)          &\lTo^{4}   &{\ \left(\tfrac{1}{2},\tfrac{1}{3},\tfrac{3}{2}\right)}\\
\uTo^{3} &           &\uTo^{3} &      \luTo^{2}       &\\
\left(1,1,1\right) &\rTo^{2}     &\left(\tfrac{1}{2},1,\tfrac{1}{2}\right)  &        &\left(\tfrac{2}{3},\tfrac{1}{2},\tfrac{1}{2}\right) \\
&             & &   \luTo^{2}     &    \\
&             &  &               &\left(2,\tfrac{1}{2},\tfrac{1}{2}\right) 
\end{diagram}
\caption{Mapping of Schwarz functions for $k=2$}
\label{map1}
\end{figure}

\section{Generalised Chazy equation with $k=\frac{2}{3}$ and its Schwarz functions }\label{gen23}

In this section we determine the Schwarz functions for the relevant angles $(\al,\beta,\ga)$ that show up in the solutions to the $k=\frac{2}{3}$ equation and present the flat or symmetric $(2,3,5)$-distribution that it determines by computing the anti-derivative
\begin{align*}
F(x)=\int \int e^{\frac{1}{2}\int y(x) \der x} \der x \der x.
\end{align*}
They are presented in Table \ref{table2}. In the table we normalise the constants appearing in $F(x)$ due to integration to be $1$. 

\begin{table}[h]

 \begin{tabular}{|c|c|c|} \hline
$(\al,\beta,\ga)$ & $x(s)$ or $x(r)$ & $F(x(s))$ or $F(x(r))$\\ \hline\hline
$(3,3,3)$ &$\frac{1}{2}\frac{s-2}{2s-1}s^3$ & $s+\frac{1}{2(2s-1)}$
 \\ \hline
$(\frac{1}{3},\frac{1}{3},3)$ & $\frac{1}{2}\left(\frac{s+2}{2s+1}\right)s^{\frac{1}{3}}$ & $\frac{s^{\frac{2}{3}}}{2s+1}$ \\ \hline

$\left(\frac{3}{2 }, \frac{1}{3},\frac{1}{2}\right)$  &  $-\frac{1}{2^{\frac{1}{3}}}\left(\frac{1-r}{1+r}\right)^{\frac{1}{3}}\left(\frac{1+3r}{1-3r}\right)$ &$\frac{(r+1)^{\frac{1}{3}}(r-1)^{\frac{2}{3}}}{3r-1}$ \\ 
& and $s=1-r^2$ & \\
\hline

$\left(\frac{3}{2},3, \frac{1}{2}\right)$  &  $-2\frac{s^2+4s-8}{\sqrt{1-s}}-16$ & $\frac{s-2}{\sqrt{s-1}}$ \\ \hline

$\left(\frac{3}{2},\frac{1}{3}, \frac{9}{2}\right)$  &  
 $-2^{-\frac{4}{3}}\left(\frac{r+3}{r-3}\right)\left(\frac{1-r}{1+r}\right)^{\frac{1}{3}}\frac{3r^4-8r^3+54r^2-81}{3r^4+8r^3+54r^2-81}$ 
 & $\left(\frac{r+1}{r-1}\right)^{\frac{1}{3}}\frac{(r+3)(r^2-9)(r-1)}{3r^4+8r^3+54r^2-81}$ \\ 
 & and $s=1-r^2$ & \\ 
 \hline

$\left(6,\frac{3}{2},\frac{3}{2}\right)$  &  $\frac{(s(1-s))^{\frac{3}{2}}(2s-1)}{128s^4-256s^3+144s^2-16s-1}$ & $\frac{1}{128s^4-256s^3+144s^2-16s-1}$ \\ \hline

$\left(\frac{2}{3}, \frac{3}{2}, \frac{3}{2}\right)$  &  $-\frac{81}{64}\frac{\sin(8r)-2\sin(4r)}{\cos(8r)+2\cos(4r)}$ & $\frac{1}{\cos(8r)+2\cos(4r)}$ \\ 
& and $s=\sin^2(3r)$ & \\
\hline
  \end{tabular}
 \caption{Parametrisations for $k=\frac{2}{3}$}
\label{table2}
 \end{table}
Inverting the function for $s$ in terms of $x$ require the solution of a quartic equation in $s$ except for two cases in the second and third last rows of Table \ref{table2} where the degrees of the polynomials involved are larger. We present $F(x)$ instead of the solutions $y(x)$ because the results take precedence in the theory of $(2,3,5)$-distributions. 

There is a Legendre duality discovered in \cite{annur} that takes equation (\ref{6ode}) to (\ref{noth}) given by the following:
\[
(x,F) \mapsto (t,H)=(F', x F'-F).
\]
We find that $H$ as a function of $t$ solves equation (\ref{noth})
\begin{equation}\label{h-noth}
10(H'')^3H^{(6)}-70(H'')^2H^{(3)}H^{(5)}-49(H'')^2(H^{(4)})^2+
280H''(H^{(3)})^2H^{(4)}-175(H^{(3)})^4=0
\end{equation}
iff equation (\ref{6ode}) holds. The prime in equation (\ref{h-noth}) refers to differentiation with respect to $t$. Equation (\ref{6ode}) can be reduced to the generalised Chazy equation with $k=\frac{2}{3}$ while equation (\ref{h-noth}) can be reduced to the generalised Chazy equation with $k=\frac{3}{2}$. We now provide an example of the computation of the entry in the first row of Table \ref{table2} with the Schwarz function $s(3,3,3,x)$. The rest of the Schwarz functions that show up in Table \ref{table2} can be determined similarly. 

\subsection{The Schwarz function $s(3,3,3,x)$}\label{s333}
As an exercise let us determine the  Schwarz triangle function $s(3,3,3,x)$ and find the flat $(2,3,5)$-distribution that this function determines. See also the first row of Table 1 in \cite{r16}. The solution to $u_{ss}+\frac{1}{4}V(s) u=0$ is
\begin{align*}
u(s)=\al \frac{2s-1}{s(s-1)}+\ga \frac{s^2(s-2)}{s-1}. 
\end{align*}
We take $u_1=\frac{2s-1}{s(s-1)}$, $u_2=\frac{s^2(s-2)}{2(s-1)}$ and obtain
\begin{align}\label{quas}
x(s)=\frac{u_2}{u_1}=\frac{s^3(s-2)}{2(2s-1)}.
\end{align}
This agrees with the expression given by (\ref{quot}).
The Wronskian is given by $c_0=W(u_1,u_2)=(u_2)_su_1-(u_1)_su_2=3$. We find
\begin{align}\label{4yq}
y(x(s))=\frac{u_1^2}{c_0}\frac{\der}{\der s}\log \frac{u_1^6}{c_0^3 s^2 (s-1)^2}=-\frac{4(2s-1)(5s^2-5s+2)}{3 s^3(s-1)^3}.
\end{align}
Using (\ref{quas}) and (\ref{4yq}), we can eliminate $s$ to obtain an equation involving $x$ and $y$.

We can also express $y$ in terms of $x$ directly since (\ref{quas}) says that $s$ is a root of the quartic polynomial 
\begin{align*}
s^4-2s^3-4xs+2x=0, 
\end{align*}
which can always be solved by radicals. 
We solve this quartic polynomial in Appendix \ref{appA}. Substituting this formula for $s=\xi+\frac{1}{2}$ into  (\ref{4yq}) gives the solution to the generalised Chazy equation with $k=\frac{2}{3}$.
The formula for $y$ is really too long and too unsightly to reproduce here.
As a function of $\xi$, we have
\begin{align*}
y(\xi)=-\frac{128}{3}\frac{\xi(20 \xi^2+3) }{(2\xi-1)^3 (2\xi+1)^3}.
\end{align*} 
Alternatively given the polynomial equation of degree 4
\begin{align*}
s^4-2s^3-4xs+2x=0
\end{align*}
and the polynomial equation of degree 6 
\begin{align*}
y s^6-3y s^5+3y s^4-(\frac{40}{3}-y)s^3-20s^2+12s-\frac{8}{3}=0
\end{align*}
that has to be satisfied because of (\ref{4yq}), 
we can eliminate $s$ by forming the resultant polynomial $P(x,y)$ of bidegree $(6,4)$:  
\begin{align*}
P(x,y)=&y^4+\frac{8(4x+1)}{x(2x+1)}y^3+\frac{16}{3}\left(\frac{70 x^2+35 x+4}{x^2(2x+1)^2}\right)y^2\\
&+\frac{256}{27}\left(\frac{(10x+1)(5x+2)(4x+1)}{x^3(2x+1)^3}\right)y+\frac{64}{9}\left(\frac{250x^2+125x+316}{x^3(2x+1)^3}\right)=0. 
\end{align*}
This defines an implicit relation between $x$ and $y$, and the integral curves are solutions to the generalised Chazy equation with parameter $k=\pm \frac{2}{3}$.

To obtain the corresponding flat $(2,3,5)$-distribution $F(x)$, we have to integrate further.
Using (\ref{4yq}) and the fact that
\begin{align*}
\der x=3\frac{s^2(s-1)^2}{(2s-1)^2}\der s,
\end{align*}
we get
\begin{align*}
F''(x(s))&=\frac{(2s-1)^3}{s^4(s-1)^4},\\
F'(x(s))&=3\left(\frac{1}{s}-\frac{1}{s-1}\right),\\
F(x(s))&=\int \int e^{\frac{1}{2}\int y \der x} \der x \der x=-\frac{9}{4}s-\frac{9}{8(2s-1)}.
\end{align*}
Modulo constants of integration, the formula for $F(x(s))$ is just $s+\frac{1}{2(2s-1)}$ as presented in the first row of Table \ref{table2}. Eliminating the parameter $s$ again gives a quartic algebraic curve $P_1(x,F)=0$. For the family of quartic curves parametrised by
\begin{align*}
(x,F)=\left(\frac{s^3(s-2)}{2(2s-1)},c\left(s+\frac{1}{2(2s-1)}\right)\right)
\end{align*}
where $c$ is constant, we can compute the Legendre dual $(x,F) \mapsto (t,H)$ by determining $t=F'$ and $H=x t-F$. This gives
\begin{align*}
(t,H)=\left(\frac{4c}{3}\left(\frac{1}{s-1}-\frac{1}{s}\right),-c\left(\frac{1}{6}+\frac{2}{3}s+\frac{2}{3(s-1)}\right)\right).
\end{align*}

Each row of Table \ref{table2} gives a solution to equation (\ref{6ode}). We have the following. 

\begin{theorem}
Each row of Table \ref{table2} determines a $(2,3,5)$-distribution $D_{F(x)}$ with split $G_2$ symmetry. 
\end{theorem}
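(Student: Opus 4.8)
The plan is to obtain the statement as a corollary of Cartan's solution of the local equivalence problem for $(2,3,5)$-distributions of the special form $D_{F(x)}$, combined with the per-row fact (asserted just above the theorem) that each tabulated $F$ solves the sixth order ODE (\ref{6ode}). First I would record the chain of equivalences linking the three objects in play. Setting $y:=2\frac{\der}{\der x}\log F''$, integration gives $F''(x)=\exp\left(\frac{1}{2}\int y\,\der x\right)$; substituting $f=(F'')^{-3/4}$ into (\ref{intc}) with $k=\frac{2}{3}$ shows, as in the derivation of (\ref{6ode}), that $F$ solves (\ref{6ode}) if and only if this $y$ solves the generalised Chazy equation (\ref{gc0}) with $k=\frac{2}{3}$. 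In particular $F''$ is an exponential, hence nowhere zero, so the genericity condition $F''(x)\neq 0$ required for $D_{F(x)}$ to have growth vector $(2,3,5)$ holds automatically for every entry, and no row of Table \ref{table2} degenerates.

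Next I would invoke Cartan's result from \cite{cartan1910}: for a distribution of the form $D_{F(x)}$ the fundamental curvature invariant vanishes precisely when $F''(x)=\exp\left(\frac{1}{2}\int y\,\der x\right)$ with $y$ a solution of the $k=\frac{2}{3}$ Chazy equation, which by the previous step is equivalent to $F$ solving (\ref{6ode}). Since each row supplies such an $F$ -- the entry $F(x(s))$ being built as $\int\int e^{\frac{1}{2}\int y\,\der x}\,\der x\,\der x$ from a Schwarz function $s$ solving the Chazy equation -- the curvature invariant of every $D_{F(x)}$ in the table vanishes. To finish I would appeal to Cartan's classification, by which vanishing of this invariant is equivalent to local equivalence with the flat model, whose infinitesimal symmetry algebra is the $14$-dimensional split $G_2$. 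Hence each $D_{F(x)}$ arising from Table \ref{table2} carries split $G_2$ symmetry, which is the assertion.

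The only genuinely computational input -- and the step I expect to be the main obstacle -- is confirming that each tabulated $F$ really does solve (\ref{6ode}). This is not a formal matter: for each row one must eliminate the auxiliary parameter $s$ (or $r$) from the parametrisation $(x(s),F(x(s)))$ to produce the implicit algebraic curve and check it against the ODE, exactly as carried out for $s(3,3,3,x)$ in Section \ref{s333} via the resultant $P_1(x,F)=0$, with the cases in the second and third last rows requiring eliminations of higher degree. Once this verification is in hand for all seven rows, the passage from ``$F$ solves (\ref{6ode})'' to ``$D_{F(x)}$ is flat with split $G_2$ symmetry'' is a direct appeal to the cited structure theory and needs no further calculation.
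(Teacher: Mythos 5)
Your proposal follows essentially the same route as the paper, whose proof of this theorem is left implicit: each row of Table \ref{table2} is constructed from a Schwarz-function solution $y$ of the $k=\frac{2}{3}$ Chazy equation via $F=\int\int e^{\frac{1}{2}\int y\,\der x}\,\der x\,\der x$, so $F$ solves (\ref{6ode}) and Cartan's criterion gives vanishing curvature, hence split $G_2$ symmetry. The one point where you overestimate the work: the verification that each tabulated $F$ solves (\ref{6ode}) is automatic by construction (as in the $s(3,3,3,x)$ computation of Section \ref{s333}), since $F''=e^{\frac{1}{2}\int y\,\der x}$ holds by definition of the table entries; the elimination of $s$ to produce an implicit algebraic curve $P_1(x,F)=0$ is only a presentation device, not a needed check.
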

We observe that the Schwarz triangle function $s(\frac{1}{2},\frac{1}{3},\frac{3}{2},x)$ appears in both the solutions to the $k=2$ Chazy equation and the $k=\frac{2}{3}$ equation. This gives an intriguing relationship between the solutions to (\ref{gc0}) for $k=\frac{2}{3}$ and $k=2$ determined by this Schwarz function.
\begin{proposition}
Let $s=s(\frac{3}{2},\frac{1}{3},\frac{1}{2},x)$ and suppose $s=1-r^2$ for some $r(x)$. Take the parametrisation for $x(s)$ given in the third row of Table \ref{table2}. We find
\begin{align*}
y_2=\frac{\der}{\der x}\log\frac{(s')^3}{(s-1)^{\frac{5}{2}}s^2}=-\frac{3}{4\cdot 2^{\frac{2}{3}}}\left(\frac{1-r}{1+r}\right)^{\frac{2}{3}}\frac{1}{r}(3r-1)(9r^3+15 r^2+7r+1)
\end{align*}
is a solution to (\ref{gc0}) with $k=2$ while
\begin{align*}
y_{\frac{2}{3}}=\frac{\der}{\der x}\log\frac{(s')^3}{(s-1)^{\frac{3}{2}}s^2}=y_2+\frac{\der}{\der x}\log(s-1)=-\frac{3}{2^{\frac{2}{3}}}\left(\frac{1-r}{1+r}\right)^{\frac{2}{3}}(3r-1)(3r^2+5 r+2)
\end{align*}
is a solution to (\ref{gc0}) with $k=\frac{2}{3}$.
\end{proposition}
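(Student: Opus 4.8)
The plan is to reduce both assertions to the general solution scheme of Section~\ref{schwarz-section} and then to extract the closed forms by the same parametric computation used for $s(\tfrac{1}{2},\tfrac{1}{3},\tfrac{3}{2},x)$ in the preceding proposition. First I would expand the elementary logarithmic derivatives $\frac{\der}{\der x}\log s'$, $\frac{s'}{s}=\frac{\der}{\der x}\log s$ and $\frac{s'}{s-1}=\frac{\der}{\der x}\log(s-1)$ and compare coefficients, so as to rewrite the two quantities in the statement as
\[
y_{\frac{2}{3}}=-\Om_1-2\Om_2-3\Om_3,\qquad y_2=-3\Om_1-2\Om_2-\Om_3 .
\]
Because $y_2$ and $y_{\frac{2}{3}}$ differ only in the exponent of $(s-1)$ (namely $\tfrac{5}{2}$ versus $\tfrac{3}{2}$), the stated identity $y_{\frac{2}{3}}=y_2+\frac{\der}{\der x}\log(s-1)$ is then immediate.

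For the membership claims I would argue directly from the scheme. The triple $(\tfrac{3}{2},\tfrac{1}{3},\tfrac{1}{2})$ is exactly $(\tfrac{1}{k},\tfrac{1}{3},\tfrac{1}{2})$ with $k=\tfrac{2}{3}$, so $-\Om_1-2\Om_2-3\Om_3=y_{\frac{2}{3}}$ solves (\ref{gc0}) with $k=\tfrac{2}{3}$. For $y_2$ I would invoke the permutation symmetry of the construction under M\"obius transformations of $s$: the map $\hat s=\frac{s}{s-1}$ fixes $s=0$ and interchanges $s=1$ and $s=\infty$, and since the Schwarzian is M\"obius invariant, $\hat s$ satisfies the Schwarzian equation for the permuted triple $(\tfrac{1}{2},\tfrac{1}{3},\tfrac{3}{2})$ with the \emph{same} independent variable $x$. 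A short computation gives $\hat\Om_1=\Om_3$, $\hat\Om_2=\Om_2$, $\hat\Om_3=\Om_1$, whence $-3\Om_1-2\Om_2-\Om_3=-\hat\Om_1-2\hat\Om_2-3\hat\Om_3$; the right-hand side is the $k=2$ solution attached to the triple $(\tfrac{1}{2},\tfrac{1}{3},\tfrac{3}{2})$ in the fifth row of Table~\ref{table1}, so $y_2$ solves (\ref{gc0}) with $k=2$. As a consistency check, for $k=2$ the substitution $f=\exp(-\tfrac12\int y\,\der x)$ turns (\ref{gc0}) into $f''''=0$, and one verifies that $f=s(s-1)^{5/4}/(s')^{3/2}$ is a cubic polynomial in $x$.

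It remains to produce the explicit $r$-formulas. Starting from the parametrisation $x(r)$ in the third row of Table~\ref{table2}, I would differentiate; the $r^2$-terms cancel and leave
\[
\frac{\der x}{\der r}=-\frac{2^{\frac{11}{3}}}{3}\,\frac{1}{(1-r)^{\frac{2}{3}}(1+r)^{\frac{4}{3}}(1-3r)^2},
\]
which determines $r'=\der r/\der x$. Substituting $s=1-r^2$, $s-1=-r^2$, $s'=-2rr'$ and replacing $\frac{\der}{\der x}$ by $r'\frac{\der}{\der r}$ (so that $\frac{r''}{r'}=\frac{\der r'}{\der r}$), I would expand $y_2$ and collect by partial fractions; the rational factor simplifies to $-\frac{2(1+3r)^2}{r(1+r)(1-3r)}$, and multiplying by $r'$ produces the stated expression for $y_2$ once $9r^3+15r^2+7r+1=(r+1)(3r+1)^2$ is recognised. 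The formula for $y_{\frac{2}{3}}$ then follows either by the same computation with exponent $\tfrac{3}{2}$, or from $y_{\frac{2}{3}}=y_2+\frac{s'}{s-1}=y_2+\frac{2r'}{r}$ together with $3r^2+5r+2=(r+1)(3r+2)$.

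The main obstacle is purely computational: carrying out the cancellation in $\der x/\der r$ and reducing the logarithmic derivatives to the compact factored forms, while consistently handling the branch choices concealed in $(s-1)^{5/2}=(-r^2)^{5/2}$ and in $\big(\tfrac{1-r}{1+r}\big)^{1/3}$ (these contribute only constant phases, which drop out under $\frac{\der}{\der x}\log$, but must be tracked). By contrast the conceptual heart of the statement---that the single Schwarz function $s(\tfrac{3}{2},\tfrac{1}{3},\tfrac{1}{2},x)$ serves both the $k=2$ and the $k=\tfrac{2}{3}$ equations---is settled transparently by the $\Om_1\leftrightarrow\Om_3$ permutation of the second paragraph.
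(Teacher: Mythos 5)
Your proposal is correct; I verified the key computations. The derivative of the third-row parametrisation does collapse to $\frac{\der x}{\der r}=-\frac{2^{11/3}}{3}(1-r)^{-\frac{2}{3}}(1+r)^{-\frac{4}{3}}(1-3r)^{-2}$ (the quadratic terms cancel, leaving the constant $\frac{16}{3}$ in the numerator), the rational factor in $y_2$ is indeed $-\frac{2(1+3r)^2}{r(1+r)(1-3r)}$, and the factorisations $9r^3+15r^2+7r+1=(r+1)(3r+1)^2$, $3r^2+5r+2=(r+1)(3r+2)$ reproduce both stated closed forms, with the difference $y_{\frac{2}{3}}-y_2=\frac{2r'}{r}$ handled correctly.

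For comparison: the paper states this proposition without any proof, so the benchmark is its treatment of the twin statements --- the proposition on $s(\tfrac{1}{2},\tfrac{1}{3},\tfrac{3}{2},x)$ in Section \ref{gen2} and Proposition \ref{subst} in Section \ref{gen3} --- both of which are proved exactly by your third paragraph's method: differentiate the parametrisation, convert $\frac{\der}{\der x}$ to $r'\frac{\der}{\der r}$, and simplify the logarithmic derivative. Where you go beyond the paper is the second paragraph. The paper implicitly justifies the two membership claims by its Section \ref{schwarz-section} scheme (the triple $(\tfrac{3}{2},\tfrac{1}{3},\tfrac{1}{2})$ is $(\tfrac{1}{k},\tfrac{1}{3},\tfrac{1}{2})$ for $k=\tfrac{2}{3}$, and $(\tfrac{1}{2},\tfrac{1}{3},\tfrac{3}{2})$ is $(\tfrac{1}{k},\tfrac{1}{3},\tfrac{3}{k})$ for $k=2$), but it never explains why the $k=2$ combination can be evaluated on the \emph{same} function $s(\tfrac{3}{2},\tfrac{1}{3},\tfrac{1}{2},x)$. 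Your observation that $\hat s=\frac{s}{s-1}$ solves the Schwarzian equation for the permuted triple with the same independent variable, together with $\hat\Om_1=\Om_3$, $\hat\Om_2=\Om_2$, $\hat\Om_3=\Om_1$ (equivalently $\frac{(\hat s')^3}{\hat s^2(\hat s-1)^{3/2}}=-\frac{(s')^3}{s^2(s-1)^{5/2}}$), supplies precisely this missing link and makes the dual role of the single Schwarz function transparent; one should only note that Möbius invariance of the Schwarzian must be supplemented by the (routine) check that the potential transforms correctly, $V_{(\ga,\beta,\al)}(\hat s)\,(\hat s')^2=V_{(\al,\beta,\ga)}(s)\,(s')^2$, which indeed holds. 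This structural argument buys a proof of the membership claims that does not lean on the explicit $r$-formulas, whereas the paper's route would establish them only after the closed forms are computed and matched against known solutions.
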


The Schwarz functions for the $k=\frac{2}{3}$ equation are determined by the following pull-back maps in Figure \ref{map2}, with the same notation from \cite{hyper}. 

\begin{figure}
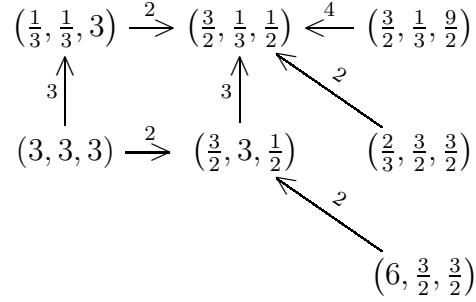

\begin{diagram}[h]
 \left(\tfrac{1}{3},\tfrac{1}{3},3\right)  &\rTo^{2} & \left(\tfrac{3}{2},\tfrac{1}{3},\tfrac{1}{2}\right)         &\lTo^{4}   &{ \left(\tfrac{3}{2},\tfrac{1}{3},\tfrac{9}{2}\right)}\\
\uTo^{3} &           &\uTo^{3} &      \luTo^{2}       &\\
\left(3,3,3\right)&\rTo^{2}     &\ \left(\tfrac{3}{2},3,\tfrac{1}{2}\right) &        &\left(\tfrac{2}{3},\tfrac{3}{2},\tfrac{3}{2}\right)\\
&             & &   \luTo^{2}     &    \\
&             &  &               &\ \left(6,\tfrac{3}{2},\tfrac{3}{2}\right)
\end{diagram}
\caption{Mapping of Schwarz functions for $k=\frac{2}{3}$}
\label{map2}
\end{figure}

The domain of this spherical triangle corresponding to the Schwarz function $s(\frac{1}{2},\frac{1}{3},\frac{3}{2},x)$ has angles $(\frac{1}{2}\pi, \frac{1}{3}\pi, \frac{3}{2}\pi)$. 
This triangle is the complement of the triangle with angles $(\frac{1}{2}\pi, \frac{2}{3}\pi, \frac{1}{2}\pi)$ in a hemisphere with the edge of the hemisphere lying along the $\frac{1}{2}\pi$ and $\frac{2}{3}\pi$ edge of the triangle. Reflecting along the $\frac{3}{2}\pi$, $\frac{1}{2}\pi$ edge gives us the ``triangle" with angles $(\frac{1}{3}\pi, \frac{1}{3}\pi, 3 \pi)$ branched over the vertex with angle $3\pi$. A reflection instead along the equatorial $\frac{1}{3}\pi$, $\frac{1}{2}\pi$ edge gives the triangle with angles $(\frac{2}{3}\pi, \frac{3}{2}\pi, \frac{3}{2}\pi)$, which is also the complement of the $(\frac{4}{3}\pi, \frac{1}{2}\pi, \frac{1}{2}\pi)$ triangle in the sphere.

\section{Generalised Chazy equation with $k=3$ and its Schwarz functions }\label{gen3}
We let $t$ denote now the independent variable. This is to avoid confusion when we compute the Legendre dual curves later on which uses both independent variables $x$ and $t$.   
\begin{theorem}
The general solution to the generalised Chazy equation with $k=3$ over the Riemann surface $\P^1=\C\cup\{\infty\}$ is given by 
\begin{align}\label{tetr}
y(t)=-\frac{3}{2}\left(\frac{1}{t-t_1}+\frac{1}{t-t_2}+\frac{1}{t-t_3}+\frac{1}{t-t_4}\right)
\end{align}
where the $4$ points $t_1$, $t_2$, $t_3$ $t_4$ on the Riemann surface are subject to the constraint $\cQ$ that $12 ae-3b d+c^2=0$ where
\begin{align*}
a(t-t_1)(t-t_2)(t-t_3)(t-t_4)=at^4
+bt^3+c t^2+dt+e.
\end{align*}
\end{theorem}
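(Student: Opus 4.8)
The plan is to mirror the $k=2$ argument, using the reduction of the generalised Chazy equation to the fourth order equation (\ref{intc}). For $k=3$ one has $k-2=1$ and $\tfrac{3k(k-2)}{2(k+6)}=\tfrac12$, so the substitution $f=\exp(-\tfrac23\int y\,\der t)$, equivalently $y=-\tfrac32\,f'/f$, turns (\ref{gc0}) into
\begin{align*}
ff''''-f'f'''+\tfrac12(f'')^2=0.
\end{align*}
This correspondence is reversible: the equation above is homogeneous of degree two in $f$, so it is invariant under the rescaling of $f$ that reflects the integration constant in $\int y\,\der t$, and hence solutions $y$ of the $k=3$ Chazy equation correspond exactly to solutions $f$ of the displayed equation taken modulo a multiplicative constant. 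I would therefore first reduce the problem to identifying all $f$ solving this fourth order equation.

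The key step, and the one that makes the $k=3$ case as clean as the $k=2$ case, is to differentiate the reduced equation once. Writing $E=ff''''-f'f'''+\tfrac12(f'')^2$, a direct calculation gives
\begin{align*}
E'=\big(f'f''''+ff'''''\big)-\big(f''f'''+f'f''''\big)+f''f'''=ff''''',
\end{align*}
all other terms cancelling in pairs. Hence $E\equiv 0$ forces $ff'''''\equiv 0$, so on any interval where $f$ does not vanish we get $f'''''=0$; that is, every solution $f$ is a polynomial of degree at most four, $f=at^4+bt^3+ct^2+dt+e$. Substituting this quartic back into $E$ (or, more slickly, noting that $f'''''=0$ already makes $E$ constant by the identity $E'=ff'''''$, so it suffices to compare constant terms) collapses all positive powers of $t$ and leaves $E=2\,(12ae-3bd+c^2)$. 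Thus the quartic solves the reduced equation precisely when the constraint $\cQ\colon 12ae-3bd+c^2=0$ holds.

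To finish, I would factor $f=a\prod_{i=1}^{4}(t-t_i)$ over $\C$ and read off
\begin{align*}
y=-\tfrac32\,\frac{f'}{f}=-\tfrac32\left(\frac{1}{t-t_1}+\frac{1}{t-t_2}+\frac{1}{t-t_3}+\frac{1}{t-t_4}\right),
\end{align*}
which is the asserted formula, with $t_1,\dots,t_4$ recording the quartic up to scale. Since $y$ depends on $f$ only through its scaling class, the free data is $[a:b:c:d:e]\in\P^4$ cut by the single quadric $\cQ$, a three parameter family matching the three parameter general solution of the third order equation (\ref{gc0}); and because the argument shows that every nonvanishing $f$ arises this way, this is genuinely the general solution rather than merely a subfamily. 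The passage to $\P^1$ is handled by allowing $\deg f<4$, which sends one or more roots $t_i$ to $\infty$ and recovers the degenerate solutions with fewer poles. I expect the only real subtleties to be the careful statement of the reversibility of the substitution (the scaling/integration-constant bookkeeping) and the treatment of the loci where $f=0$; the rest is the short differentiation identity $E'=ff'''''$ together with the elementary constant-term computation.
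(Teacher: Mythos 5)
Your proposal is correct and follows essentially the same route as the paper: the substitution $y=-\tfrac{3}{2}f'/f$ reducing to $ff''''-f'f'''+\tfrac12(f'')^2=0$, differentiation to obtain $ff'''''=0$ (hence $f$ a quartic polynomial), and back-substitution yielding the constraint $12ae-3bd+c^2=0$. Your write-up is in fact somewhat more detailed than the paper's, making explicit the identity $E'=ff'''''$, the constant-term evaluation, and the parameter count, but the underlying argument is identical.
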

\begin{proof}
Under the substitution $y=\frac{-3 f'}{2f}$ for $f$ non-zero, the generalised Chazy equation with $k=3$ is equivalent to the nonlinear 4th order ODE $2f''''f-2f'''f'+(f'')^2=0$. Differentiating this ODE gives the linear ODE $f'''''=0$ whose solutions are given by $f=at^4+bt^3+ct^2+dt+e$. Substituting this back into the 4th order ODE, the coefficients are subject to the constraint $12 ae-3b d+c^2=0$ and therefore the general solution to the $k=3$ Chazy equation over $\C$ is given by (\ref{tetr}).
\end{proof}

This general solution has been mentioned in \cite{chazy2} and \cite{cosgrove}. For this parameter, Chazy (\cite{chazy2} p. 347) makes the observation that assuming the four roots are distinct, the roots $t_1$, $t_2$, $t_3$ and $t_4$ can be chosen to lie on the {\it sommets} of the {\it t\'etra\`edre 
r\'egulier}. The condition $\cQ$ defines a projective variety in $\P^3$. 
When one of the points is $t_4=\infty$, the condition on the remaining three roots to be a solution is that they must lie on the conic $\cC=\{t_1,t_2,t_3 \in \C |t_1^2+t_2^2+t_3^2-t_1t_2-t_1t_3-t_2t_3=0\}$. 
In this situation,
\[
y=-2\left(\frac{1}{t-t_1}+\frac{1}{t-t_2}+\frac{1}{t-t_3}\right)
\]
satisfies the $k=2$ equation while
\[
y=-\frac{3}{2}\left(\frac{1}{t-t_1}+\frac{1}{t-t_2}+\frac{1}{t-t_3}\right) 
\]
satisfies the $k=3$ equation. We have

\begin{proposition}
The inclusion of varieties
\begin{diagram}
\cQ        &\subset  &\P^3 \\
\uInto  &           &\uInto\\
\cC         &\subset   &\P^2
\end{diagram}
corresponds to the following isomorphism 
\[
\left\{\begin{tabular}{l}
\mbox{Solutions to $k=3$ equation} \\
\mbox{with one pole at $\{\infty\}$}
\end{tabular}\right\}
\cong
\left\{\begin{tabular}{l}
\mbox{Solutions to $k=2$ equation} \\
\mbox{with poles in $\cC$}
\end{tabular}\right\}
\]
where the isomorphism is given by a constant rescaling. 
\end{proposition}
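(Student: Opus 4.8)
The plan is to read both solution families directly off the two general-solution theorems just proved and to exhibit the correspondence as the constant rescaling that turns the coefficient $-\tfrac{3}{2}$ into $-2$. A $k=3$ solution with a pole at $\{\infty\}$ is obtained by sending $t_4\to\infty$ in (\ref{tetr}), so that the term $\tfrac{1}{t-t_4}$ drops out and one is left with $y=-\tfrac{3}{2}\left(\tfrac{1}{t-t_1}+\tfrac{1}{t-t_2}+\tfrac{1}{t-t_3}\right)$. The map I would use sends this to the $k=2$ solution $y=-2\left(\tfrac{1}{t-t_1}+\tfrac{1}{t-t_2}+\tfrac{1}{t-t_3}\right)$ with the same three poles; on functions this is multiplication by $\tfrac{4}{3}$, a constant rescaling, with inverse multiplication by $\tfrac{3}{4}$.

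First I would make the degeneration $t_4=\infty$ algebraically precise. Sending the fourth root to infinity forces the leading coefficient $a$ to vanish, so $f=at^4+bt^3+ct^2+dt+e$ collapses to the cubic $f=b(t-t_1)(t-t_2)(t-t_3)=bt^3+ct^2+dt+e$. Substituting $a=0$ into the constraint $\cQ$ reduces it to $c^2-3bd=0$. Writing $c=-b\,e_1$ and $d=b\,e_2$ with $e_1=t_1+t_2+t_3$ and $e_2=t_1t_2+t_1t_3+t_2t_3$, the factor $b^2$ cancels and the condition becomes $e_1^2=3e_2$, i.e. $t_1^2+t_2^2+t_3^2-t_1t_2-t_1t_3-t_2t_3=0$. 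This is exactly the defining equation of $\cC$, so the slice of $\cQ$ cut out by ``one pole at $\{\infty\}$'' (the vanishing of the leading coefficient) coincides with the conic $\cC$. This identifies the bottom inclusion $\cC\hookrightarrow\cQ$ of the square and realizes the vertical inclusion $\P^2\hookrightarrow\P^3$ as the locus where the fourth root is sent to infinity.

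With the constraint reduction in hand the two sides match set-theoretically. By the $k=3$ theorem, solutions with a pole at $\{\infty\}$ are parametrised precisely by triples on $\cC$, while by the $k=2$ theorem the general three-pole solution $y=-2\sum_i(t-t_i)^{-1}$ exists for arbitrary poles, in particular for poles on $\cC$. The rescaling $y\mapsto\tfrac{4}{3}y$ carries one family bijectively onto the other, fixes the pole data, and hence commutes with the inclusions of the diagram; invertibility is immediate. That both rescaled functions genuinely solve their respective equations is not a separate check but a direct consequence of the two theorems, so no independent verification of the Chazy equations is needed.

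The one delicate point is the bookkeeping of the ambient projective spaces and the upgrade from a bijection of solution sets to an isomorphism of varieties. The real content is the computation above, showing that $\cC$ is cut out of $\cQ$ by the hyperplane ``$a=0$'' with no spurious components, together with checking that the passage from the cubic's coefficients $(b,c,d,e)$ to the symmetric functions of the roots carries this hyperplane section of the quadric $\cQ$ isomorphically onto the conic $\cC$ in the $[t_1:t_2:t_3]$ coordinates (where one must also note that $y$ depends only on the \emph{unordered} triple, so the symmetry in $t_1,t_2,t_3$ is harmless). Once that identification is set up compatibly, the rescaling is visibly an algebraic isomorphism and the square commutes; all the Chazy content has already been supplied by the two preceding theorems.
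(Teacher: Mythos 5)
Your proposal is correct and follows essentially the same route as the paper: the paper states this proposition as a summary of the discussion immediately preceding it, where setting $t_4=\infty$ (equivalently $a=0$) reduces the constraint $\cQ$ to the conic $\cC$ and both rescaled functions solve their respective equations by the two general-solution theorems. Your write-up simply makes explicit the elementary-symmetric-function computation $c^2-3bd=b^2(e_1^2-3e_2)$ that the paper leaves implicit, which is a welcome addition rather than a deviation.
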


Determining the Schwarz functions when $k=3$ is a more complicated task than the cases when $k=\frac{2}{3}$ and $k=2$. We present the functions in Table \ref{table3}. Observe that $s(1,\frac{1}{3},\frac{1}{3},t)$ appear both in the $k=2$ and $k=3$ equations. Also observe that for $s(\frac{2}{3},\frac{1}{3},\frac{1}{3},t)$ the formula for $y$ occurs twice, with both 
\begin{align*}
y=\frac{\der}{\der t}\log \frac{(s')^3}{s^2(s-1)^2} \text{ and } y=\frac{\der}{\der t}\log \frac{(s')^3}{s^{\frac{5}{2}}(s-1)^{\frac{5}{2}}} \
\end{align*}
satisfying the $k=3$ Chazy equation. The functions here that appear in Schwarz's list are $s(\frac{1}{3}, \frac{1}{3},\frac{1}{2},t)$ and $s(\frac{2}{3}, \frac{1}{3},\frac{1}{3},t)$, both of which possess tetrahedral symmetry. Again $w=e^{\frac{2 \pi i}{3}}$ denotes the cube root of unity.
\begin{table}[h]
 \begin{tabular}{|c|c|c|c|} \hline
$(\al,\beta,\ga)$ & $t(s)$ & $s(t)$ & $y(t)$\\ \hline\hline
$(\frac{2}{3},\frac{2}{3},\frac{2}{3})$ &$\frac{{}_2F_1(\frac{1}{6},\frac{5}{6};\frac{5}{3};s)}{{}_2F_1(\frac{1}{6},-\frac{1}{2};\frac{1}{3};s)}s^{\frac{2}{3}}$ & $I(-2^{-\frac{2}{3}}w^2t)=\frac{s(t)^2}{4(s(t)-1)}$ & Same as $y(t)$ \\
& &  where $I(t)=s(\frac{2}{3},\frac{1}{3},\frac{1}{2},t)$& for $s(\frac{2}{3},\frac{1}{3},\frac{1}{2},t)$
 \\ \hline
$(\frac{2}{3},\frac{1}{3},\frac{1}{3})$ &  $\frac{{}_2F_1(\frac{1}{6},\frac{5}{6};\frac{4}{3};s)}{{}_2F_1(-\frac{1}{6},\frac{1}{2};\frac{2}{3};s)}s^{\frac{1}{3}}$ & $ \frac{\sqrt{2}}{32}\frac{(\sqrt{t^3+2}+3\sqrt{2})^3 (\sqrt{t^3+2}-\sqrt{2})}{(\sqrt{t^3+2})^3}$ & $-\frac{9}{2}\frac{t^2}{t^3+2}$
\\& & &and $-\frac{6(t^3-4)}{t(t^3-16)}$

\\ \hline

$\left(\frac{1}{3}, \frac{1}{3},\frac{1}{2}\right)$& $\frac{{}_2F_1(\frac{7}{12},\frac{1}{4};\frac{4}{3};s)}{{}_2F_1(-\frac{1}{12},\frac{1}{4};\frac{2}{3};s)}s^{\frac{1}{3}}$ & $-\frac{t^3 (t^3-64)^3}{512(t^3+8)^3}$ & $-\frac{9}{2}\frac{t^2}{t^3+8}$\\ \hline

$\left(\frac{2}{3},\frac{1}{3}, \frac{1}{2}\right)$  &  
$\frac{{}_2F_1(\frac{1}{12},\frac{3}{4};\frac{4}{3};s)}{{}_2F_1(\frac{5}{12},-\frac{1}{4};\frac{2}{3};s)}s^{\frac{1}{3}}$ 
& $\frac{s}{(4s-1)^3}=\frac{1}{512}\frac{t^3(27t^3+64)^3}{(27t^3-8)^3}$  & $-\frac{6}{t}\left(\frac{27t^3+16}{27t^3+64}\right)$\\ \hline

$\left(\frac{1}{3},\frac{1}{3}, 1\right)$  & $s^{\frac{1}{3}}$   & $t^3$ & $-\frac{9}{2}\frac{t^2}{t^3-1}$ \\ 
 \hline

$\left(\frac{4}{3},\frac{1}{3},\frac{1}{3}\right)$  & $\frac{{}_2F_1(\frac{7}{6},-\frac{1}{6};\frac{4}{3};s)}{{}_2F_1(-\frac{1}{2},\frac{5}{6};\frac{2}{3};s)}s^{\frac{1}{3}}$ &  $I(2^\frac{2}{3}t)=4s(t) (1-s(t))$ & Same as $y(t)$\\
& &  where $I(t)=s(\frac{2}{3},\frac{1}{3},\frac{1}{2},t)$&  for $s(\frac{2}{3},\frac{1}{3},\frac{1}{2},t)$

\\ \hline

  \end{tabular}
 \caption{Parametrisations for $k=3$}
\label{table3}
 \end{table}

\begin{proposition}
Let $s=s(\frac{2}{3},\frac{1}{3}, \frac{1}{2},t)$. We have
\begin{align}\label{c1}
 \frac{s}{(4s-1)^3}=\frac{1}{512}\frac{t^3(27t^3+64)^3}{(27t^3-8)^3}
 \end{align}
 as written in the fourth row of Table \ref{table3}.
  \end{proposition}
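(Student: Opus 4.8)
The plan is to verify the identity directly from the defining Schwarzian equation, avoiding the hypergeometric series altogether. By the Definition of a Schwarz triangle function, $s=s(\tfrac{2}{3},\tfrac{1}{3},\tfrac{1}{2},t)$ solves $\{s,t\}+\tfrac{(s')^2}{2}V(s)=0$ with the potential attached to $(\al,\be,\ga)=(\tfrac{2}{3},\tfrac{1}{3},\tfrac{1}{2})$, namely
\[
V(s)=\frac{8}{9s^2}+\frac{3}{4(s-1)^2}-\frac{13}{12\,s(s-1)}.
\]
Passing to the inverse branch $t=t(s)$ and using the inversion rule $\{s,t\}=-(s')^2\{t,s\}$, this is equivalent to $\{t,s\}=\tfrac{1}{2}V(s)$, where primes now denote differentiation with respect to $s$. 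This last form is what I would check.

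First I would clear denominators in the claimed identity to obtain the polynomial relation
\[
\Phi(s,t):=512\,s\,(27t^3-8)^3-(4s-1)^3\,t^3\,(27t^3+64)^3=0,
\]
and regard $\Phi=0$ as defining $t$ as an algebraic function of $s$. Implicit differentiation of $\Phi=0$ expresses $t'=\tfrac{dt}{ds}$, and then $t''$ and $t'''$, as rational functions of $(s,t)$, so that the Schwarzian $\{t,s\}=\tfrac{t'''}{t'}-\tfrac{3}{2}\big(\tfrac{t''}{t'}\big)^2$ is again rational in $(s,t)$. The identity then reduces to showing that $\{t,s\}-\tfrac{1}{2}V(s)$, cleared of denominators, vanishes on the curve $\Phi=0$; since $\Phi$ is irreducible this means its numerator is divisible by $\Phi$ in $\C[s,t]$, which one confirms by reducing modulo $\Phi$ (viewing everything in $\C(t)[s]$, where $\Phi$ has degree $3$). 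This is a finite, if bulky, computation that MAPLE carries out, and it shows that the algebraic function cut out by $\Phi$ solves the Schwarzian equation.

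The remaining point, and the main obstacle, is that the Schwarzian equation determines $s$ only up to the tetrahedral monodromy, so the correct branch has to be singled out and identified with $s(\tfrac{2}{3},\tfrac{1}{3},\tfrac{1}{2},t)$. I would do this by matching ramification data. The map $R(t)=\tfrac{1}{512}\tfrac{t^3(27t^3+64)^3}{(27t^3-8)^3}$ has degree $12$, the order of the tetrahedral group $A_4\subset PSL_2(\C)$, and the degree-$3$ rational function $m(s)=\tfrac{s}{(4s-1)^3}$ makes the asserted identity read $m(s)=R(t)$. One reads off that $R$ vanishes to order $3$ at $t=0$, which must match the angle-$\tfrac{1}{3}\pi$ vertex $s=0$ together with the normalisation $t(0)=0$ and the local form $t(s)\sim\text{const}\cdot s^{1-c}=\text{const}\cdot s^{1/3}$ of Table \ref{table3}; that $R$ vanishes to order $3$ at the roots of $27t^3+64$, matching the angle-$\tfrac{2}{3}\pi$ vertices $s=\infty$; and that $R$ takes the value $\tfrac{1}{27}$ with multiplicity $2$ at six points, matching the angle-$\tfrac{1}{2}\pi$ vertices $s=1$. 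Confirming that the exponents of the algebraic branch at these three types of point are $\tfrac{1}{3},\tfrac{2}{3},\tfrac{1}{2}$ fixes the branch and completes the identification.

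As a cross-check aligned with the paper's own method, the same relation can be recovered from the tetrahedral function $s(\tfrac{1}{3},\tfrac{1}{3},\tfrac{1}{2},t)=-\tfrac{t^3(t^3-64)^3}{512(t^3+8)^3}$ of the third row of Table \ref{table3} via the degree-$2$ hypergeometric transformation linking $(\tfrac{1}{3},\tfrac{1}{3},\tfrac{1}{2})$ to $(\tfrac{2}{3},\tfrac{1}{3},\tfrac{1}{2})$ tabulated in \cite{goursat} and \cite{hyper}, or by expanding $t(s)=\tfrac{{}_2F_1(\frac{1}{12},\frac{3}{4};\frac{4}{3};s)}{{}_2F_1(\frac{5}{12},-\frac{1}{4};\frac{2}{3};s)}\,s^{1/3}$ as a Puiseux series about $s=0$ and checking agreement with $\Phi=0$ to order exceeding $\deg\Phi$, which suffices because the relation is algebraic of bounded degree.
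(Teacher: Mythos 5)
Your strategy---clear denominators to get $\Phi(s,t)=512\,s\,(27t^3-8)^3-(4s-1)^3t^3(27t^3+64)^3$, verify by reduction modulo $\Phi$ that the algebraic function it cuts out satisfies the Schwarzian equation with the $(\tfrac{2}{3},\tfrac{1}{3},\tfrac{1}{2})$ potential, and then single out the correct branch---is genuinely different from the paper's proof, which never touches the ODE: the paper applies Goursat's \emph{cubic} transformation (formula (121) of \cite{goursat}) to the two hypergeometric factors of the inversion formula, thereby rewriting $t(s)$ for $(\tfrac{2}{3},\tfrac{1}{3},\tfrac{1}{2})$ in terms of the known rational Schwarz function of $(\tfrac{1}{3},\tfrac{1}{3},\tfrac{1}{2})$ from the third row of Table \ref{table3}, and reads off (\ref{c1}) directly. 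Your potential $V(s)$ and the inversion rule $\{t,s\}=\tfrac{1}{2}V(s)$ are correct, and the mod-$\Phi$ verification is a legitimate, finite computation. Note also that your closing ``cross-check aligned with the paper's method'' \emph{is} the paper's method, except that the transformation linking $(\tfrac{1}{3},\tfrac{1}{3},\tfrac{1}{2})$ to $(\tfrac{2}{3},\tfrac{1}{3},\tfrac{1}{2})$ is cubic, with pullback $s\mapsto 27s/(4s-1)^3$ (the degree-$3$ arrow in Figure \ref{map3}), not quadratic.

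The gap is in the branch identification, which you rightly call the main obstacle but then dispatch too quickly. Your ODE step only shows that each branch of $\Phi=0$ lies in the three-parameter solution family $g\circ t_0$, $g\in PSL_2(\C)$, where $t_0$ is the normalised quotient (\ref{quot}). The data you propose to match---orders of vanishing of $R$, the local exponents $\tfrac{1}{3},\tfrac{2}{3},\tfrac{1}{2}$, the normalisation $t(0)=0$ and $t\sim \mathrm{const}\cdot s^{1/3}$---do not force $g=\mathrm{id}$: for every $c\in\C$ the function $t_0/(c\,t_0+1)$ satisfies all of these conditions, since a M\"obius map fixing $0$ preserves local exponents and, up to the scaling you already fix, the leading coefficient. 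So your checks pin down only two of the three parameters. What closes the gap is second-order local information: because $\Phi$ depends on $t$ only through $t^3$, the branch of $\Phi=0$ through $(s,t)=(0,0)$ has Puiseux expansion $s^{1/3}(1+O(s))$ with \emph{no} $s^{2/3}$ term; the quotient (\ref{quot}) is likewise $s^{1/3}$ times a power series in $s$; whereas $t_0/(c\,t_0+1)=s^{1/3}-c\,s^{2/3}+\cdots$ has a non-zero $s^{2/3}$ coefficient unless $c=0$. Adding this one check completes the identification. The same objection applies to your fallback of matching series ``to order exceeding $\deg\Phi$'': the order of vanishing of a non-zero algebraic function is not bounded by its degree (consider $s^N$), so that cut-off is unjustified as stated; it is the jet-matching \emph{inside the ODE solution family}, as above, that makes a finite check sufficient.
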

  \begin{proof}
 This is obtained from considering the pull back map $(\frac{1}{3},\frac{1}{3},\frac{1}{2}) \stackrel{3}\gets (\frac{2}{3},\frac{1}{3},\frac{1}{2})$ in  Figure \ref{map3}. This corresponds to the cubic transformation given in formula (121) of \cite{goursat}. For $\al=-\frac{1}{12}$, we have
 \begin{align*}
 {}_2F_1\left(\frac{5}{12},-\frac{1}{4};\frac{2}{3};s\right)=(1-4s)^{\frac{1}{4}}{}_2F_1\left(-\frac{1}{12},\frac{1}{4};\frac{2}{3};\frac{27s}{(4s-1)^3}\right)
 \end{align*}  
 while for $\al=\frac{1}{4}$, we have
 \begin{align*}
 {}_2F_1\left(\frac{1}{12},\frac{3}{4};\frac{4}{3};s\right)=(1-4s)^{-\frac{3}{4}}{}_2F_1\left(\frac{7}{12},\frac{1}{4};\frac{4}{3};\frac{27s}{(4s-1)^3}\right).
 \end{align*}  
 Together this gives
 \begin{align*}
\frac{{}_2F_1(\frac{1}{12},\frac{3}{4};\frac{4}{3};s)}{{}_2F_1(\frac{5}{12},-\frac{1}{4};\frac{2}{3};s)}s^{\frac{1}{3}}=-\frac{1}{3}\left(\frac{27s}{(4s-1)^3}\right)^{\frac{1}{3}}\frac{{}_2F_1(\frac{7}{12},\frac{1}{4};\frac{4}{3};\frac{27s}{(4s-1)^3})}{{}_2F_1(-\frac{1}{12},\frac{1}{4};\frac{2}{3};\frac{27s}{(4s-1)^3})}.
 \end{align*}
Using the formula for the right hand side as given by the third row of Table \ref{table3} gives 
 \begin{align*}
\frac{s}{(4s-1)^3}=\frac{1}{512}\frac{t^3(27t^3+64)^3}{(27t^3-8)^3}.
\end{align*}
\end{proof} 
 Let us now also explain how we determined the corresponding solution $y(t)$ for $s(\frac{2}{3},\frac{1}{3}, \frac{1}{2},t)$, as this is not so straightforward. 
 \begin{proposition} \label{subst}
 Let $s=s\left(\frac{2}{3},\frac{1}{3}, \frac{1}{2},t\right)$. Then the solution to (\ref{gc0}) with $k=3$ is given by
 \begin{align*}
 y=\frac{\der }{\der t}\log\frac{(s')^3}{s^{\frac{5}{2}} (s-1)^{\frac{3}{2}}}=-\frac{6}{t}\left(\frac{27t^3+16}{27t^3+64}\right).
 \end{align*}
 \end{proposition}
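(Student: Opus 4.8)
The plan is to reduce the statement to the already-solved case of the Schwarz function $s(\tfrac{1}{3},\tfrac{1}{3},\tfrac{1}{2},\tau)$ in the third row of Table \ref{table3}, using the cubic hypergeometric transformation of the preceding proposition. Write $R(s)=\frac{27s}{(4s-1)^3}$ and set $\sigma=R(s)$, so that $\sigma=s(\tfrac{1}{3},\tfrac{1}{3},\tfrac{1}{2},\tau)$ for the variable $\tau$ produced by the pull-back $(\tfrac{1}{3},\tfrac{1}{3},\tfrac{1}{2})\stackrel{3}{\gets}(\tfrac{2}{3},\tfrac{1}{3},\tfrac{1}{2})$. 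First I would pin down the exact relation between the independent variables. In the displayed identity in the proof of the previous proposition the left-hand side is $t$ while the right-hand side equals $-\frac{1}{3}\sigma^{1/3}\frac{{}_2F_1(\frac{7}{12},\frac{1}{4};\frac{4}{3};\sigma)}{{}_2F_1(-\frac{1}{12},\frac{1}{4};\frac{2}{3};\sigma)}=-\frac{1}{3}\tau$, since $(27)^{1/3}=3$ and the quotient is precisely the row-three parametrisation $\tau(\sigma)$. Hence $\tau=-3t$; getting this scaling right is essential, as it is exactly what converts the $t^3$ occurring naively into the $27t^3$ of the claimed answer.

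Next I would transport the known solution. For the $(\tfrac{1}{3},\tfrac{1}{3},\tfrac{1}{2})$ configuration the framework of Section \ref{schwarz-section} gives $\tilde y=-\Om_1-2\Om_2-3\Om_3=\frac{\der}{\der\tau}\log\frac{(\sigma_\tau)^3}{\sigma^2(\sigma-1)^{3/2}}$, whose explicit value is $-\frac{9}{2}\frac{\tau^2}{\tau^3+8}$ by Table \ref{table3}. Using $\tau=-3t$ and $\sigma_\tau=-\tfrac{1}{3}\sigma_t$ (the constant factor dying under the logarithmic derivative) this becomes $\frac{\der}{\der t}\log\frac{(\sigma_t)^3}{\sigma^2(\sigma-1)^{3/2}}=-3\tilde y$. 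Now substitute $\sigma_t=R'(s)\,s'$ and factor out the target form: one finds
\[
\frac{\der}{\der t}\log\frac{(\sigma_t)^3}{\sigma^2(\sigma-1)^{3/2}}=\frac{\der}{\der t}\log\frac{(s')^3}{s^{5/2}(s-1)^{3/2}}+\frac{\der}{\der t}\log G(s),\qquad G(s)=\frac{(R'(s))^3\,s^{5/2}(s-1)^{3/2}}{R(s)^2(R(s)-1)^{3/2}}.
\]
So $y=-3\tilde y-\frac{\der}{\der t}\log G(s)$, and the whole problem is to understand $G$.

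The crux is that $G$ collapses to something depending only on $\sigma$. Using the factorisations $R'(s)=-\frac{27(8s+1)}{(4s-1)^4}$ and $R(s)-1=-\frac{(s-1)(8s+1)^2}{(4s-1)^3}$ (the latter from $27s-(4s-1)^3=-(s-1)(8s+1)^2$), every occurrence of $(8s+1)$ and $(s-1)$ cancels and, up to a multiplicative constant, one is left with $G(s)=\frac{\sqrt{s}}{(4s-1)^{3/2}}=\sqrt{s/(4s-1)^3}=\mathrm{const}\cdot\sqrt{\sigma}$. Hence $\frac{\der}{\der t}\log G(s)=\frac{1}{2}\frac{\der}{\der t}\log\sigma$, which is computable directly from the explicit $\sigma(\tau)$ of row three together with $\tau=-3t$, with no need for the high-degree function $s(t)$ itself. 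Assembling $y=-3\tilde y-\frac{1}{2}\frac{\der}{\der t}\log\sigma$ and simplifying, the terms $\pm\frac{243t^2}{2(27t^3-8)}$ cancel and the remainder combines to $-\frac{3}{2t}-\frac{243t^2}{2(27t^3+64)}=-\frac{6(27t^3+16)}{t(27t^3+64)}$, as claimed.

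I expect the main obstacle to be establishing the collapse $G(s)=\mathrm{const}\cdot\sqrt{\sigma}$ with the correct exponents: this is precisely the point where the slightly unusual weights $\tfrac{5}{2}$ and $\tfrac{3}{2}$ in the logarithmic-derivative presentation of $y$ are forced, and it is what makes the pull-back of the $(\tfrac{1}{3},\tfrac{1}{3},\tfrac{1}{2})$ solution reproduce the $(\tfrac{2}{3},\tfrac{1}{3},\tfrac{1}{2})$ solution rather than some other combination. A fully self-contained alternative, avoiding the transformation theory, would instead differentiate the algebraic relation (\ref{c1}) implicitly to express $s'$ as a function of $s$, substitute into $y=\frac{\der}{\der t}\log\frac{(s')^3}{s^{5/2}(s-1)^{3/2}}$, and eliminate $s$ using (\ref{c1}); this is routine in principle but the elimination is of high degree, which is exactly the computational burden that the pull-back route sidesteps.
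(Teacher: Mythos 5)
Your proposal is correct, and it takes a genuinely different route from the paper's own proof. Both arguments rest on the cubic-transformation identity (\ref{c1}), and both begin by differentiating it --- your $s'=\sigma_t/R'(s)$ is the paper's $s'=-3\frac{s(4s-1)}{8s+1}\frac{Q'}{Q}$ in disguise, since the paper's $Q=\frac{s^{1/3}}{4s-1}$ equals $\frac{1}{3}\sigma^{1/3}$ --- but they diverge afterwards. The paper substitutes into the logarithmic-derivative formula and arrives at the mixed relation (\ref{form2}), whose right-hand side $\frac{64s^3-48s^2+66s-1}{(8s+1)^2(s-1)}$ still involves $s$, and then eliminates $s$ between (\ref{form2}) and (\ref{c1}); that elimination is the computational heart of the paper's proof. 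You avoid elimination entirely by showing that the $s$-dependence cancels identically: with $R'(s)=-\frac{27(8s+1)}{(4s-1)^4}$ and the factorisation $27s-(4s-1)^3=-(s-1)(8s+1)^2$, your $G(s)$ collapses to $\mathrm{const}\cdot\sqrt{\sigma}$, i.e.\ the combination $\frac{(s')^3}{s^{5/2}(s-1)^{3/2}}$ is invariant up to a multiplicative constant under $s\mapsto\sigma=\frac{27s}{(4s-1)^3}$, and constants die under $\frac{\der}{\der t}\log$. I verified the collapse, the normalisation $\tau=-3t$ (which is exactly what turns $t^3$ into the $27t^3$ of the claimed answer), and the final assembly $-\frac{3}{2t}-\frac{243t^2}{2(27t^3+64)}=-\frac{6(27t^3+16)}{t(27t^3+64)}$; all are correct, and the branch ambiguities in the half-integer powers are harmless for the same reason as in Proposition \ref{jkl}. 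What your route buys: it is hand-checkable, and it explains \emph{why} $y$ comes out rational in $t$, which in the paper's elimination looks like an accident. What it costs: you need both the $\sigma(\tau)$ and the $y(\tau)$ entries of the third row of Table \ref{table3} as input (the paper needs only (\ref{c1}) and brute force), so your proof is conditional on those table entries, though they concern the simpler triangle $\left(\frac{1}{3},\frac{1}{3},\frac{1}{2}\right)$ and are logically independent of the proposition being proved.
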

 \begin{proof}
From (\ref{c1}), we obtain
 \begin{align*}
 \frac{s^{\frac{1}{3}}}{4s-1}=\frac{1}{8}\frac{t(27t^3+64)}{(27t^3-8)}.
 \end{align*}
 Let $Q$ denote the right hand side term of the above expression.
 We have
\begin{align*}
-\frac{1}{3}\frac{8s+1}{s^{\frac{2}{3}}(4s-1)^2}\der s=-\frac{1}{3}\frac{8s+1}{s(4s-1)}Q\der s=Q'\der t, 
\end{align*}
or alternatively, 
\begin{align*}
s'=-3\frac{s(4s-1)}{8s+1}\frac{Q'}{Q} .
\end{align*}
The formula for $y$ gives 
\begin{align*}
y=&\frac{\der}{\der t}\log\frac{(s')^3}{s^\frac{5}{2}(s-1)^{\frac{3}{2}}}\\
=&\frac{\der}{\der t}\log\frac{\left(-3\frac{s(4s-1)}{8s+1}\frac{Q'}{Q}\right)^3}{s^\frac{5}{2}(s-1)^{\frac{3}{2}}}\\
=&-3\frac{Q'}{Q}\frac{s(4s-1)}{8s+1}\frac{\der}{\der s}\log\frac{\left(-3\frac{s(4s-1)}{8s+1}\right)^3}{s^\frac{5}{2}(s-1)^{\frac{3}{2}}}+3\frac{\der}{\der t}\log\frac{Q'}{Q}\\
=&\frac{3}{2}\frac{Q'}{Q}\frac{64 s^3-48 s^2+66s-1}{(8s+1)^2 (s-1)}+3\frac{\der}{\der t}\log\frac{Q'}{Q}.
\end{align*}
Therefore we have
\begin{align}\label{form2}
\frac{2Q}{3Q'}\left(y-3\frac{\der}{\der t}\log\frac{Q'}{Q}\right)=\frac{64 s^3-48 s^2+66s-1}{(8s+1)^2 (s-1)}.
\end{align}
Using the fact that
 $\frac{s}{(4s-1)^3}=\frac{1}{512}\frac{t^3(27t^3+64)^3}{(27t^3-8)^3}$, we eliminate $s$ appearing in both this equation and (\ref{form2}) and get
\[
y=-\frac{6}{t}\left(\frac{27t^3+16}{27t^3+64}\right).
\]
\end{proof}
If we make the substitution $t=\frac{-4\si^{\frac{1}{3}}}{\si+2}$
and $s=-\frac{1}{64}\frac{\si(\si+8)^3}{(1-\si)^3}$ into (\ref{c1}), we get an identity. This alternative parametrisation comes from the mapping $(\frac{2}{3},\frac{1}{3},\frac{1}{2}) \stackrel{4}\gets (\frac{2}{3},\frac{1}{3},2)$ in  Figure \ref{map3} and $\si$ agrees with $s(\frac{2}{3},\frac{1}{3},2,t)$ up to a constant reparametrisation of $t$. From this, we find that $(t,y_3)$ given by
\begin{align*}
t&=\frac{\si^{\frac{1}{3}}}{\si+2}\\
y_3&=-\frac{3}{2}\frac{(\si+2)(\si^3+6\si^2-96\si+8)}{\si^{\frac{1}{3}}(\si+8) (\si-1)^2}\\
&=-\frac{3}{2t}\left(1-\frac{8}{\si-1}-\frac{9}{(\si-1)^2}+\frac{8}{\si+8}\right)=-\frac{3}{2t}\left(\frac{1-108t^3}{1-27t^3}\right)
\end{align*}
solves the $k=3$ equation while $(t,y_{\frac{3}{2}})$ given by
\begin{align*}
t&=\frac{\si^{\frac{1}{3}}}{\si+2}\\
y_{\frac{3}{2}}&=-\frac{9}{4}\frac{(\si+2)(\si-10)\si^{\frac{2}{3}}}{(\si-1)^2}\\
&=-\frac{9}{4t}\left(1-\frac{8}{\si-1}-\frac{9}{(\si-1)^2}\right)=\frac{3}{2}y_3+\frac{9}{4t}\frac{8}{\si+8}
\end{align*}
solves the $k=\frac{3}{2}$ equation. The latter solution agrees with the solution given in Section \ref{oct}, which means that we have reparametrised the solutions for $y(t)$ given by $s(\frac{2}{3}, \frac{1}{3},\frac{1}{2},t)$ to those given in the subsequent Section \ref{oct}. 

For brevity we denote $I(t)=s(\frac{2}{3}, \frac{1}{3},\frac{1}{2},t)$,
$J(t)=s(\frac{4}{3}, \frac{1}{3},\frac{1}{3},t)$ and $K(t)=s(\frac{2}{3}, \frac{2}{3},\frac{2}{3},t)$. We have found using quadratic transformations (similar to those given in the next section) that
$I(2^{\frac{2}{3}}t)=4 J(t)(1-J(t))$ and $I(-2^{-\frac{2}{3}}w^2t)=\frac{K(t)^2}{4(K(t)-1)}$ where $w=e^{\frac{2}{3}\pi i}$.

\begin{proposition}\label{jkl}
The functions $I$, $J$, $K$ determine the same solutions to the generalised Chazy equation for $k=3$.
\end{proposition}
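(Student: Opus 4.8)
The plan is to derive the proposition from the two quadratic transformation identities recorded immediately above,
\[
I(2^{\frac{2}{3}}t)=4J(t)(1-J(t)),\qquad
I(-2^{-\frac{2}{3}}w^2t)=\frac{K(t)^2}{4(K(t)-1)},
\]
combined with the $SL_2(\C)$-equivalence framework of Section \ref{sl2-section}. Recall that each Schwarz function $s$ produces a Chazy solution of the form $\frac{\der}{\der t}\log\frac{(s')^3}{s^p(s-1)^q}$, where the exponents $(p,q)$ are fixed by the slots the angles occupy: $(p,q)=(2,2)$ for $K=s(\frac{2}{3},\frac{2}{3},\frac{2}{3},t)$, $(p,q)=(\frac{5}{2},\frac{5}{2})$ for $J=s(\frac{4}{3},\frac{1}{3},\frac{1}{3},t)$, and $(p,q)=(\frac{5}{2},\frac{3}{2})$ for $I=s(\frac{2}{3},\frac{1}{3},\frac{1}{2},t)$ as in Proposition \ref{subst}. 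Writing $y_I,y_J,y_K$ for the three solutions, the aim is to show that after substituting each quadratic relation the $J$- and $K$-formulas collapse onto the $I$-formula evaluated at the rescaled argument, up to a multiplicative constant that dies under $\frac{\der}{\der t}\log$.

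For the passage to $J$, I would set $\tau=2^{\frac{2}{3}}t$ and use the two algebraic consequences of $I=4J(1-J)$, namely
\[
I-1=-(2J-1)^2,\qquad \frac{\der I}{\der t}=-4(2J-1)J'.
\]
Substituting into $\frac{(\der I/\der\tau)^3}{I^{5/2}(I-1)^{3/2}}$ and collecting powers, the factor $(2J-1)^3$ produced in the numerator by $\der I/\der t$ is matched exactly by the $(2J-1)^3$ arising from $(I-1)^{3/2}=\bigl(-(2J-1)^2\bigr)^{3/2}$ in the denominator. These cancel, and one is left with $\mathrm{const}\cdot\frac{(J')^3}{J^{5/2}(J-1)^{5/2}}$, which is exactly the $J$-formula. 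Applying $\frac{\der}{\der t}\log$ and using $\frac{\der}{\der t}=2^{\frac{2}{3}}\frac{\der}{\der\tau}$ then gives $y_J(t)=2^{\frac{2}{3}}y_I(2^{\frac{2}{3}}t)$.

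The passage to $K$ is entirely parallel, now through $\psi(s)=\frac{s^2}{4(s-1)}$. Here $I=\psi(K)$ gives $I-1=\frac{(K-2)^2}{4(K-1)}$ and $\frac{\der I}{\der t}=\frac{K(K-2)}{4(K-1)^2}K'$, and (just as for $J$) $\der I/\der\sigma$ differs from $\der I/\der t$ only by the constant $\der t/\der\sigma$, where $\sigma=-2^{-\frac{2}{3}}w^2t$. I would again track the powers of the special factor: the $(K-2)^3$ generated in the numerator cancels against the $(K-2)^3$ from $(I-1)^{3/2}$, while the surplus powers of $(K-1)$ recombine so that $\frac{(\der I/\der\sigma)^3}{I^{5/2}(I-1)^{3/2}}=\mathrm{const}\cdot\frac{(K')^3}{K^2(K-1)^2}$, the $K$-formula. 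This yields $y_K(t)=-2^{-\frac{2}{3}}w^2\,y_I(-2^{-\frac{2}{3}}w^2t)$.

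Finally, each of $y_J(t)=2^{\frac{2}{3}}y_I(2^{\frac{2}{3}}t)$ and $y_K(t)=-2^{-\frac{2}{3}}w^2y_I(-2^{-\frac{2}{3}}w^2t)$ is precisely the transformation rule (\ref{sl2}) for a diagonal $g\in SL_2(\C)$: taking $c=0$ kills the $-\tfrac{6c}{cx+d}$ term and leaves a pure rescaling $\tilde y(t)=a^2y(a^2t)$ with $a^2$ equal to $2^{\frac{2}{3}}$, respectively $-2^{-\frac{2}{3}}w^2$. Thus $y_J$ and $y_K$ are $SL_2(\C)$ images of $y_I$, so $I$, $J$, $K$ determine the same solution in the sense of Definition \ref{def1}. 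I expect the only real obstacle to be the bookkeeping: one must feed in the correct exponents $(p,q)$ for each angle triple and confirm that the chain-rule factor $(2J-1)$ (respectively $(K-2)$) cancels precisely against the factor supplied by $(I-1)$. The half-integer powers and signs introduced by the fractional exponents are harmless, since they only contribute constants inside the logarithm before differentiation.
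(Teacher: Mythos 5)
Your proposal is correct and follows essentially the same route as the paper: the same two quadratic identities, the same proportionality of the three expressions $\frac{(s')^3}{s^p(s-1)^q}$, and the same conclusion by taking $\frac{\der}{\der t}\log$. You additionally carry out the cancellation that the paper compresses into ``a computation shows,'' with the correct exponents $(\frac{5}{2},\frac{5}{2})$ for the $J$-combination (the paper's displayed formula writes $(J-1)^{\frac{3}{2}}$ there, apparently a typo), and your closing $SL_2(\C)$ reading of the rescaled arguments makes precise the sense --- Definition \ref{def1} --- in which the three logarithmic derivatives give ``the same'' solution.
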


\begin{proof}
We have $I(2^{\frac{2}{3}}t)=4 J(t) (1-J(t))$ and $I(-2^{-\frac{2}{3}}w^2t)=\frac{K(t)^2}{4(K(t)-1)}$.
A computation shows that
\begin{align*}
\frac{(I')^3}{I^{\frac{5}{2}}(I-1)^{\frac{3}{2}}}=c_1\frac{(J')^3}{J^{\frac{5}{2}}(J-1)^{\frac{3}{2}}}=c_2\frac{(K')^3}{K^2(K-1)^2}
\end{align*}
for some possible complex constants $c_1$ and $c_2$. By choosing the same logarithmic branch, the logarithmic derivatives of these three functions give the same solution to the $k=3$ Chazy equation. 
\end{proof}

\section{Generalised Chazy equation with $k=\frac{3}{2}$ and its Schwarz functions }\label{gen32}

In this section we present the Schwarz triangle functions that show up in the solutions to the $k=\frac{3}{2}$ equation. 
The Schwarz triangle functions that appear can be grouped into those that have already shown up in the the solutions to the $k=3$ equation and those that have not. The additional ones are presented in Table \ref{table4}. The Schwarz triangle functions $I$, $J$ and $K$ have already shown up in the solutions to the $k=3$ equation and they solve the $k=\frac{3}{2}$ equation (see the previous section) by deforming the $k=3$ solutions along some function of $K$ (or $I$, $J$).

\begin{proposition}
Let $K(t)=s(\frac{2}{3}, \frac{2}{3},\frac{2}{3},t)$. We have
\begin{align*}
\frac{\der}{\der t}\log\frac{(K')^3}{K^{\frac{5}{2}}(K-1)},\hspace{12pt} \frac{\der}{\der t}\log\frac{(K')^3}{K(K-1)^{\frac{5}{2}}} \mbox{~and~} \frac{\der}{\der t}\log\frac{(K')^3}{K^{\frac{5}{2}}(K-1)^{\frac{5}{2}}}
\end{align*}
all satisfying Chazy's equation with $k=\frac{3}{2}$. This comes from considering the different combinations $y=-4 \Om_1-\Om_2-\Om_3$, $y=- \Om_1-4\Om_2-\Om_3$ and $y=- \Om_1-\Om_2-4\Om_3$, discussed in \cite{r16}. In all three cases the function $K$ remains the same since its triangular domain has the property of being equilateral.  
\end{proposition}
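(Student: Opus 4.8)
The plan is to deduce all three assertions from the single case already established by the general construction of Section \ref{schwarz-section}, exploiting the $S_3$ symmetry of the equilateral triangle $(\frac23,\frac23,\frac23)$.

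First I would express the three logarithmic derivatives through $\Om_1,\Om_2,\Om_3$ (formed from $s=K$ and the independent variable $t$). A direct calculation from the definitions gives
\begin{align*}
-\Om_1-\Om_2-4\Om_3&=\frac{\der}{\der t}\log\frac{(K')^3}{K^{5/2}(K-1)},\\
-\Om_1-4\Om_2-\Om_3&=\frac{\der}{\der t}\log\frac{(K')^3}{K(K-1)^{5/2}},\\
-4\Om_1-\Om_2-\Om_3&=\frac{\der}{\der t}\log\frac{(K')^3}{K^{5/2}(K-1)^{5/2}},
\end{align*}
so the three displayed expressions are exactly these three combinations, in order. By the general result recalled at the end of Section \ref{schwarz-section}, the combination $-4\Om_1-\Om_2-\Om_3$ solves (\ref{gc0}) whenever $(\al,\be,\ga)=(\frac23,\frac1k,\frac1k)$; for $k=\frac32$ this is the equilateral triple $(\frac23,\frac23,\frac23)$, so the third expression is already a solution. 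It remains to treat the first two.

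The key structural input is an $S_3$-invariance. For the equilateral angles the potential is
\[
V(s)=\frac59\left(\frac{1}{s^2}+\frac{1}{(s-1)^2}-\frac{1}{s(s-1)}\right),
\]
and I would verify that $V(\si(s))\,\si'(s)^2=V(s)$ for the generators $\si(s)=1-s$ and $\si(s)=\frac1s$ of the anharmonic group of M\"obius maps permuting $\{0,1,\infty\}$; hence the quadratic differential $V(s)\,\der s^2$ is invariant under the whole group. Since the Schwarzian derivative obeys $\{\si(K),t\}=\{K,t\}$, each $\hat K:=\si(K)$ satisfies the same Schwarzian equation $\{\hat K,t\}+\frac12(\hat K')^2V(\hat K)=0$ as $K$, i.e. it is again a Schwarz function for the same equilateral triangle. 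This is the precise meaning of the remark that ``$K$ remains the same.''

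Finally I would feed the transformed functions into the general construction. Since the derivation of $y=-4\Om_1-\Om_2-\Om_3$ uses only that its argument solves the Schwarzian equation with the potential $V_{(\frac23,\frac23,\frac23)}$, applying it to $\hat K=\si(K)$ again yields a solution of (\ref{gc0}) with $k=\frac32$, namely $\frac{\der}{\der t}\log\frac{(\hat K')^3}{\hat K^{5/2}(\hat K-1)^{5/2}}$. Substituting $\si(s)=\frac{1}{1-s}$ (so that $\hat K'=K'/(1-K)^2$ and $\hat K-1=K/(1-K)$) collapses this expression, up to an additive constant inside the logarithm, to $\frac{\der}{\der t}\log\frac{(K')^3}{K^{5/2}(K-1)}$, the first expression; the choice $\si(s)=\frac1s$ produces $\frac{\der}{\der t}\log\frac{(K')^3}{K(K-1)^{5/2}}$, the second. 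As the additive constant disappears under differentiation, both solve (\ref{gc0}) with $k=\frac32$. The hard part will be purely bookkeeping: tracking the branches of the half-integer powers (the spurious $(-1)^{5/2}$ factors) to confirm they only shift the logarithm by a constant, and checking that the general construction genuinely depends only on the Schwarzian equation and not on a particular normalisation of $K$. As an independent check that sidesteps the symmetry argument entirely, one may substitute $\{K,t\}=-\frac12(K')^2V(K)$ into each expression and verify (\ref{gc0}) directly as a rational identity in $K$ and $K'$.
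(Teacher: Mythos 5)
Your proposal is correct, but it proves the two ``permuted'' cases by a genuinely different mechanism than the paper. The paper's own justification is essentially a citation together with a symmetry remark: in \cite{r16} each of the three combinations $-4\Om_1-\Om_2-\Om_3$, $-\Om_1-4\Om_2-\Om_3$, $-\Om_1-\Om_2-4\Om_3$ is shown to solve (\ref{gc0}) for its own (permuted) angle triple, and for $k=\frac{3}{2}$ all of these triples coincide with $(\frac{2}{3},\frac{2}{3},\frac{2}{3})$, so one and the same function $K$ serves in all three cases --- that is all that is meant by ``the function $K$ remains the same since its triangular domain is equilateral.'' You instead use only the single combination stated in Section \ref{schwarz-section} of this paper, namely $-4\Om_1-\Om_2-\Om_3$ for $(\frac{2}{3},\frac{1}{k},\frac{1}{k})$, and manufacture the other two by postcomposing $K$ with the anharmonic M\"obius maps permuting $\{0,1,\infty\}$, after verifying the invariance $V(\si(s))\,\si'(s)^2=V(s)$ of the equilateral potential; your computations are sound: $\hat K=\frac{1}{1-K}$ and $\hat K=\frac{1}{K}$ do convert $\frac{(\hat K')^3}{\hat K^{5/2}(\hat K-1)^{5/2}}$ into $\frac{(K')^3}{K^{5/2}(K-1)}$ and $\frac{(K')^3}{K(K-1)^{5/2}}$ up to constant factors inside the logarithm, and your matching of the three displayed expressions with $-\Om_1-\Om_2-4\Om_3$, $-\Om_1-4\Om_2-\Om_3$, $-4\Om_1-\Om_2-\Om_3$ respectively is the correct one (note it reverses the order in which the proposition lists the combinations). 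What your route buys is self-containedness: the permuted combinations from \cite{r16} are never invoked, being replaced by an explicit covariance argument that also makes the ``equilateral'' remark precise. What the paper's route buys is brevity and generality: the fact cited from \cite{r16} holds for every $k$ and every admissible angle triple, whereas your symmetry argument only recovers its equilateral specialisation. The two caveats you flag are real but harmless: the half-integer-power branches contribute only additive constants, which the paper dismisses the same way in Proposition \ref{jkl}, and the construction of Chazy solutions from $\Om_1,\Om_2,\Om_3$ indeed depends only on the Schwarzian equation (equivalently the generalised Darboux--Halphen system), not on a normalisation of $K$, which is what licenses feeding $\si(K)$ into it.
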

Let us denote $y_3=\frac{\der}{\der t}\log\frac{(K')^3}{K^2(K-1)^2}$. This is the solution to the $k=3$ equation in the previous section. 
\begin{corollary}\label{lll}
The functions $y_3+\frac{\der}{\der t}\log \frac{K-1}{\sqrt{K}}$, 
$y_3+\frac{\der}{\der t}\log \frac{K}{\sqrt{K-1}}$ and $y_3+\frac{\der}{\der t}\log \frac{1}{\sqrt{K(K-1)}}$ are all solutions to Chazy's equations with $k=\frac{3}{2}$.
\end{corollary}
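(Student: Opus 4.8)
The plan is to recognize that this Corollary is nothing more than an algebraic reformulation of the Proposition that immediately precedes it, and to carry out the proof entirely by splitting logarithms. First I would recall that the Proposition already certifies that the three logarithmic derivatives $\frac{\der}{\der t}\log\frac{(K')^3}{K^{5/2}(K-1)}$, $\frac{\der}{\der t}\log\frac{(K')^3}{K(K-1)^{5/2}}$ and $\frac{\der}{\der t}\log\frac{(K')^3}{K^{5/2}(K-1)^{5/2}}$ solve the $k=\frac{3}{2}$ Chazy equation. The entire strategy is then to show that each of the three functions listed in the Corollary is literally equal to one of these three already-established solutions, so that no new differential equation needs to be verified.

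The mechanism is the additivity of the logarithm. Since $y_3=\frac{\der}{\der t}\log\frac{(K')^3}{K^2(K-1)^2}$ by definition, I would subtract $y_3$ from each expression in the Proposition. Using $\frac{\der}{\der t}\log\frac{A}{B}-\frac{\der}{\der t}\log\frac{C}{D}=\frac{\der}{\der t}\log\frac{AD}{BC}$, the common factor $(K')^3$ cancels and only the surviving powers of $K$ and $K-1$ remain. For the first expression this yields $\frac{\der}{\der t}\log\frac{K^2(K-1)^2}{K^{5/2}(K-1)}=\frac{\der}{\der t}\log\frac{K-1}{\sqrt{K}}$, which is exactly the correction term in the first entry of the Corollary. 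Hence $\frac{\der}{\der t}\log\frac{(K')^3}{K^{5/2}(K-1)}=y_3+\frac{\der}{\der t}\log\frac{K-1}{\sqrt{K}}$.

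Carrying out the identical cancellation for the remaining two pairings produces $\frac{\der}{\der t}\log\frac{(K')^3}{K(K-1)^{5/2}}=y_3+\frac{\der}{\der t}\log\frac{K}{\sqrt{K-1}}$ and $\frac{\der}{\der t}\log\frac{(K')^3}{K^{5/2}(K-1)^{5/2}}=y_3+\frac{\der}{\der t}\log\frac{1}{\sqrt{K(K-1)}}$, matching the second and third entries of the Corollary respectively. Since the left-hand sides are solutions of the $k=\frac{3}{2}$ equation by the Proposition, the three functions in the Corollary are solutions as well, which completes the argument.

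There is essentially no genuine obstacle here; the only point demanding any care is that the additive splitting of the logarithmic derivatives is legitimate, which holds because all three expressions are built from the same Schwarz function $K$ using the same logarithmic branch fixed in the Proposition. Thus the only thing to check is the bookkeeping of the half-integer exponents of $K$ and $K-1$, and once these are tracked correctly the three identities fall out immediately.
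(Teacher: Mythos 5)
Your proposal is correct and matches the paper's (implicit) reasoning exactly: the paper states Corollary \ref{lll} without a separate proof precisely because it is the immediate logarithm-splitting rewriting of the preceding Proposition, using $y_3=\frac{\der}{\der t}\log\frac{(K')^3}{K^2(K-1)^2}$. Your bookkeeping of the exponents is accurate, so nothing is missing.
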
 
In the above expressions, $K$ can be substituted for $I$ or $J$ as well.

The remaining Schwarz functions that have not shown up in the $k=3$ case are given by $s(\frac{4}{3}, \frac{2}{3},\frac{1}{2},t)$, $s(\frac{4}{3}, \frac{4}{3},\frac{4}{3},t)$, $s(\frac{8}{3}, \frac{2}{3},\frac{2}{3},t)$ and $s(\frac{2}{3},\frac{1}{3},2,t)$.
We discuss $s(\frac{2}{3},\frac{1}{3},2,t)$ in Section \ref{oct}. 
Again for brevity let us denote $L(t)=s(\frac{4}{3}, \frac{2}{3},\frac{1}{2},t)$,
$M(t)=s(\frac{4}{3}, \frac{4}{3},\frac{4}{3},t)$ and $N(t)=s(\frac{8}{3}, \frac{2}{3},\frac{2}{3},t)$. 
The relationship between $L$, $M$ and $N$ are presented in Table \ref{table4}. We determine them in the following fashion, using only quadratic transformations. 
The inversion formula for $L$ gives
\begin{align*}
t(L)=\frac{{}_2F_1(-\frac{1}{12},\frac{5}{4};\frac{5}{3};L)}{{}_2F_1(-\frac{3}{4},\frac{7}{12};\frac{1}{3};L)}L^{\frac{2}{3}}.
\end{align*}
We find
\begin{align*}
\frac{{}_2F_1(-\frac{5}{6},\frac{11}{6};\frac{5}{3};s)}{{}_2F_1(-\frac{3}{2},\frac{7}{6};\frac{1}{3};s)}s^{\frac{2}{3}}=2^{-\frac{4}{3}}\frac{{}_2F_1(-\frac{1}{12},\frac{5}{4};\frac{5}{3};4s(1-s))}{{}_2F_1(-\frac{3}{4},\frac{7}{12};\frac{1}{3};4s(1-s))}(4s(1-s))^{\frac{2}{3}}.
\end{align*}
The left hand side is the inversion formula for $N(t)$, while the right hand side is $2^{-\frac{4}{3}}t(4N(1-N))$ and so
\begin{align*}
L(2^{\frac{4}{3}}t)=4 N(t) (1-N(t)).
\end{align*}

Similarly, we find
\begin{align*}
\frac{{}_2F_1(-\frac{1}{6},\frac{7}{6};\frac{7}{3};s)}{{}_2F_1(-\frac{1}{6},-\frac{3}{2};-\frac{1}{3};s)}s^{\frac{4}{3}}=2^{\frac{4}{3}}w^2\frac{{}_2F_1(-\frac{1}{12},\frac{5}{4};\frac{5}{3};\frac{s^2}{4(s-1)})}{{}_2F_1(-\frac{3}{4},\frac{7}{12};\frac{1}{3};\frac{s^2}{4(s-1)})}\left(\frac{s^2}{4(s-1)}\right)^{\frac{2}{3}},
\end{align*}
where $w=e^{\frac{2}{3}\pi i}$.
The left hand side is the inversion formula for $M(t)$, while the right hand side is $2^{\frac{4}{3}}w^2t(\frac{M^2}{4(M-1)})$ and so
\begin{align*}
L(2^{-\frac{4}{3}}wt)=\frac{M(t)^2}{4(M(t)-1)}.
\end{align*}

\begin{table}[h]

 \begin{tabular}{|c|c|c|} \hline
$(\al,\beta,\ga)$ & $t(s)$ & $s(t)$ \\ \hline\hline
$(\frac{4}{3},\frac{4}{3},\frac{4}{3})$ 
&$\frac{{}_2F_1(-\frac{1}{6},\frac{7}{6};\frac{7}{3};s)}{{}_2F_1(-\frac{1}{6},-\frac{3}{2};-\frac{1}{3};s)}s^{\frac{4}{3}}$& $L(2^{-\frac{4}{3}}wt)=\frac{s(t)^2}{4(s(t)-1)}$ 
 \\ \hline

$\left(\frac{4}{3}, \frac{2}{3}, \frac{1}{2}\right)$  & 
$\frac{{}_2F_1(-\frac{1}{12},\frac{5}{4};\frac{5}{3};s)}{{}_2F_1(-\frac{3}{4},\frac{7}{12};\frac{1}{3};s)}s^{\frac{2}{3}}$ & $L(t)$  \\ \hline

$\left(\frac{8}{3},\frac{2}{3},\frac{2}{3}\right)$  &  $\frac{{}_2F_1(-\frac{5}{6},\frac{11}{6};\frac{5}{3};s)}{{}_2F_1(-\frac{3}{2},\frac{7}{6};\frac{1}{3};s)}s^{\frac{2}{3}}$& $L(2^{\frac{4}{3}}t)=4s(t) (1-s(t))$ 
\\ \hline
$\left(\frac{2}{3},\frac{1}{3},2\right)$  &  $\frac{2s^{\frac{1}{3}}}{s+2}$& Roots of the cubic polynomial \\
& & $s^3+6 s^2+(12-\frac{8}{t^3}) s+8=0$
\\ \hline
  \end{tabular}
 \caption{Parametrisations for $k=\frac{3}{2}$}
\label{table4}
 \end{table}

\begin{proposition}
The functions $L$, $M$, $N$ determine the same solutions to the generalised Chazy equation for $k=\frac{3}{2}$.
\end{proposition}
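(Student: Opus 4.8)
The plan is to follow verbatim the strategy of Proposition \ref{jkl} for the triple $I$, $J$, $K$. The two algebraic relations needed have just been established, namely $L(2^{\frac{4}{3}}t)=4N(t)(1-N(t))$ and $L(2^{-\frac{4}{3}}wt)=\frac{M(t)^2}{4(M(t)-1)}$. First I would fix the correct weight expression attached to each Schwarz function using the $\Om$-combinations of Section \ref{schwarz-section}. Since $M=s(\frac{4}{3},\frac{4}{3},\frac{4}{3},t)$ is equilateral it is governed by $y=-2(\Om_1+\Om_2+\Om_3)$, giving the weight $\frac{(M')^3}{M^2(M-1)^2}$; since $N=s(\frac{8}{3},\frac{2}{3},\frac{2}{3},t)$ is of type $(\frac{4}{k},\frac{1}{k},\frac{1}{k})$ it is governed by $y=-4\Om_1-\Om_2-\Om_3$, giving $\frac{(N')^3}{N^{\frac{5}{2}}(N-1)^{\frac{5}{2}}}$; and $L=s(\frac{4}{3},\frac{2}{3},\frac{1}{2},t)$ is the angle ordering obtained from the listed pattern $(\frac{1}{k},\frac{2}{k},\frac{1}{2})$ by the swap $\al\leftrightarrow\be$, so the adapted $\Om$-combination is $y=-2\Om_1-\Om_2-3\Om_3$ and its weight is $\frac{(L')^3}{L^{\frac{5}{2}}(L-1)^{\frac{3}{2}}}$.

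The core of the argument is then a chain-rule computation showing that these three weights agree up to nonzero constants once the two relations are substituted. Writing $\hat L(t)=L(2^{\frac{4}{3}}t)=4N(1-N)$ one finds $\hat L-1=-(2N-1)^2$ and $\hat L'=4(1-2N)N'$, so that the factor $(1-2N)^3$ from $(\hat L')^3$ is cancelled by the factor $(2N-1)^3$ coming from $(\hat L-1)^{\frac{3}{2}}$, while the powers of $N$ and of $(1-N)$ match the exponent $\frac{5}{2}$ exactly; this yields
\[
\frac{(\hat L')^3}{\hat L^{\frac{5}{2}}(\hat L-1)^{\frac{3}{2}}}=c_1\,\frac{(N')^3}{N^{\frac{5}{2}}(N-1)^{\frac{5}{2}}}
\]
for a nonzero constant $c_1$. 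Likewise, writing $\tilde L(t)=L(2^{-\frac{4}{3}}wt)=\frac{M^2}{4(M-1)}$ gives $\tilde L-1=\frac{(M-2)^2}{4(M-1)}$ and $\tilde L'=\frac{M'M(M-2)}{4(M-1)^2}$, and here every factor of $(M-2)$ cancels between $(\tilde L')^3$ and $(\tilde L-1)^{\frac{3}{2}}$, leaving
\[
\frac{(\tilde L')^3}{\tilde L^{\frac{5}{2}}(\tilde L-1)^{\frac{3}{2}}}=c_2\,\frac{(M')^3}{M^2(M-1)^2}
\]
for a nonzero constant $c_2$.

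To conclude I would take $\frac{\der}{\der t}\log$ of both identities. The constants $c_1$, $c_2$ drop out, and the only residual effect of the argument rescalings $t\mapsto 2^{\frac{4}{3}}t$ and $t\mapsto 2^{-\frac{4}{3}}wt$ is the substitution of a linear argument, which by the transformation law (\ref{sl2}) is precisely the diagonal $SL_2(\C)$ action $y(x)\mapsto\la\,y(\la x)$. Hence the three logarithmic derivatives define $SL_2(\C)$-equivalent solutions in the sense of Definition \ref{def1}; since the solution $y=-4\Om_1-\Om_2-\Om_3$ built from $N$ already solves the $k=\frac{3}{2}$ equation, so do the solutions built from $L$ and $M$, and all three coincide as claimed.

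I expect the only genuinely delicate points to be (i) pinning down the weight $(\frac{5}{2},\frac{3}{2})$ for $L$, which is not read off a listed family directly but must be recovered by permuting the $\Om$-coefficients of the $(\frac{1}{k},\frac{2}{k},\frac{1}{2})$ case to match the ordering $(\frac{4}{3},\frac{2}{3},\frac{1}{2})$; and (ii) keeping the fractional-power branch cuts consistent so that $c_1$ and $c_2$ are honest constants rather than sign-ambiguous functions. Both are handled by choosing a common branch, exactly as in the proof of Proposition \ref{jkl}, after which the remaining manipulations are the routine cancellations sketched above.
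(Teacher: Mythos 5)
Your proposal is correct and takes essentially the same route as the paper: the paper's proof likewise starts from the two relations $L(2^{\frac{4}{3}}t)=4N(t)(1-N(t))$ and $L(2^{-\frac{4}{3}}wt)=\frac{M(t)^2}{4(M(t)-1)}$, asserts (as ``a computation shows'') precisely the weight identities
\begin{align*}
\frac{(L')^3}{L^{\frac{5}{2}}(L-1)^{\frac{3}{2}}}=c_3\frac{(M')^3}{M^2(M-1)^2}=c_4\frac{(N')^3}{N^{\frac{5}{2}}(N-1)^{\frac{5}{2}}},
\end{align*}
and concludes by taking logarithmic derivatives with a common branch, exactly as in Proposition \ref{jkl}. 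Your chain-rule cancellations merely make that computation explicit, and your reading of the argument rescalings as the diagonal $SL_2(\C)$ action is consistent with the paper's convention (Definition \ref{def1}) under which such solutions are identified.
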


\begin{proof}
Similar to Proposition \ref{jkl}, a computation shows that
\begin{align*}
\frac{(L')^3}{L^{\frac{5}{2}}(L-1)^{\frac{3}{2}}}=c_3\frac{(M')^3}{M^2(M-1)^2}=c_4\frac{(N')^3}{N^\frac{5}{2}(N-1)^\frac{5}{2}}
\end{align*}
again for some possible complex constants $c_3$ and $c_4$. Again by choosing the same logarithmic branch, the logarithmic derivatives of these three functions give the same solution to the $k=\frac{3}{2}$ Chazy equation. 
\end{proof}

We are now left with the problem of determining what 
\begin{align*}
y(t)=\frac{\der}{\der t}\log \frac{(L')^3}{L^{\frac{5}{2}}(L-1)^{\frac{3}{2}}}
\end{align*}
is. We shall show that this can be reparametrised to give the solution (\ref{lmn}) below.

 \begin{proposition}
 Let $\tau=s(\frac{1}{3},\frac{2}{3},2,\frac{9t}{16})$ and $L=s(\frac{4}{3},\frac{2}{3},\frac{1}{2},t)$. We have 
 \begin{align}\label{ssi}
 \frac{L^{\frac{1}{3}}}{4L-1}=\frac{4}{3}\tau^{\frac{1}{3}}\left(\frac{1-\tau}{1+8\tau}\right).
 \end{align}
 \end{proposition}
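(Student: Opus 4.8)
The plan is to mirror the cubic-transformation computation already carried out for $s(\frac{2}{3},\frac{1}{3},\frac{1}{2},t)$ in the proof of Proposition \ref{subst}. First I would record the two inversion formulas involved. For $L=s(\frac{4}{3},\frac{2}{3},\frac{1}{2},t)$ the parameters are $(a,b,c)=(-\frac{3}{4},\frac{7}{12},\frac{1}{3})$, so by (\ref{quot})
\begin{align*}
t(L)=\frac{{}_2F_1(-\frac{1}{12},\frac{5}{4};\frac{5}{3};L)}{{}_2F_1(-\frac{3}{4},\frac{7}{12};\frac{1}{3};L)}L^{\frac{2}{3}},
\end{align*}
which is the entry in Table \ref{table4}. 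For $\tau=s(\frac{1}{3},\frac{2}{3},2,z)$ the parameters are $(a,b,c)=(-1,-\frac{2}{3},\frac{1}{3})$, and the crucial simplification is that the numerator ${}_2F_1(-\frac{1}{3},0;\frac{5}{3};\tau)=1$ collapses (zero upper parameter) while the denominator ${}_2F_1(-1,-\frac{2}{3};\frac{1}{3};\tau)=1+2\tau$ is a terminating, hence linear, series. Thus the inversion formula for $\tau$ reads $z(\tau)=\frac{\tau^{2/3}}{1+2\tau}$, where $z=\frac{9t}{16}$, so that $t=\frac{16}{9}\frac{\tau^{2/3}}{1+2\tau}$.

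Next I would apply the relevant Goursat cubic transformation — the degree-$3$ arrow connecting the triangle groups $(\frac{4}{3},\frac{2}{3},\frac{1}{2})$ and $(\frac{1}{3},\frac{2}{3},2)$, entirely analogous to formula (121) of \cite{goursat} used earlier. Concretely I seek algebraic prefactors $\Phi,\Psi$ and a cubic rational function $R(L)$ with
\begin{align*}
{}_2F_1\left(-\tfrac{3}{4},\tfrac{7}{12};\tfrac{1}{3};L\right)&=\Phi(L)\,{}_2F_1\left(-1,-\tfrac{2}{3};\tfrac{1}{3};R(L)\right),\\
{}_2F_1\left(-\tfrac{1}{12},\tfrac{5}{4};\tfrac{5}{3};L\right)&=\Psi(L)\,{}_2F_1\left(-\tfrac{1}{3},0;\tfrac{5}{3};R(L)\right)=\Psi(L).
\end{align*}
Substituting these into $t(L)$, and using ${}_2F_1(-1,-\frac{2}{3};\frac{1}{3};R(L))=1+2R(L)$, the ratio reduces to the purely algebraic expression $\frac{\Psi(L)}{\Phi(L)}\frac{L^{2/3}}{1+2R(L)}$. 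Identifying $\tau=R(L)$ and matching against $t=\frac{16}{9}\frac{\tau^{2/3}}{1+2\tau}$ forces $\frac{\Psi(L)}{\Phi(L)}L^{2/3}=\frac{16}{9}\tau^{2/3}$; cubing converts this into a polynomial relation between $L$ and $\tau$.

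Finally I would rearrange that polynomial relation to isolate the combination $\frac{L^{1/3}}{4L-1}$ and bring it into the form $\frac{4}{3}\tau^{1/3}\frac{1-\tau}{1+8\tau}$, exactly as $\frac{s^{1/3}}{4s-1}$ was extracted from (\ref{c1}) in the proof of Proposition \ref{subst}. The main obstacle is pinning down the precise cubic transformation from \cite{goursat}, \cite{hyper} — that is, identifying the correct prefactors $\Phi,\Psi$ and the cubic map $R(L)$ — since all the subsequent manipulation is mechanical once these are in hand; a secondary nuisance is keeping the cube-root branches consistent so that the factor $\frac{4}{3}$, together with the $\frac{9}{16}$ rescaling of the argument of $\tau$, emerge with the correct sign. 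As a consistency check I would expand both sides as Puiseux series about a common base point, which simultaneously confirms the identity and fixes the branches unambiguously.
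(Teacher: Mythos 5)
Your first step is sound and matches the paper: the inversion formula for $L$ is exactly the Table \ref{table4} entry, and your observation that for $\tau$ the numerator of (\ref{quot}) degenerates to $1$ (zero upper parameter) while the denominator terminates to $1+2\tau$, giving $\frac{9t}{16}=\frac{\tau^{2/3}}{1+2\tau}$, is precisely the parameter relation the paper uses. The gap is in your middle step: the direct degree-$3$ rational transformation you postulate, $\tau=R(L)$ with $R$ a cubic rational function, does not exist — and cannot. The identity you are trying to prove, once cubed, reads
\begin{align*}
\frac{27L}{(4L-1)^3}=\frac{64\,\tau(1-\tau)^3}{(1+8\tau)^3},
\end{align*}
which is a correspondence of bidegree $(3,4)$: degree $3$ in $L$ and degree $4$ in $\tau$. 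Writing $c$ for the common value, the map $\tau\mapsto\frac{64\tau(1-\tau)^3}{(1+8\tau)^3}$ is a connected degree-$4$ cover, so $\tau$ generates a degree-$4$ extension of $\C(c)$, while $\C(L)$ is only a degree-$3$ extension of $\C(c)$; hence no root $\tau$ of the quartic can lie in $\C(L)$, i.e.\ $\tau$ is a four-valued algebraic function of $L$ and not a rational function of it (and symmetrically $L$ is three-valued over $\tau$). This is also visible in Figure \ref{map3}: there is no arrow joining $\left(\frac{4}{3},\frac{2}{3},\frac{1}{2}\right)$ and $\left(\frac{2}{3},\frac{1}{3},2\right)$; each is connected only to the intermediate node $\left(\frac{2}{3},\frac{1}{3},\frac{1}{2}\right)$, by a degree-$3$ and a degree-$4$ arrow respectively. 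So the "main obstacle" you flag — pinning down the precise transformation — is not a computational nuisance but an impossibility, and the subsequent matching step never gets off the ground.

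The repair is to factor through that intermediate triangle function, which is what the paper does. Goursat's cubic transformation (formula (121) of \cite{goursat}) converts the two hypergeometric functions in $t(L)$ into functions of the intermediate variable $\frac{v}{v-1}=\frac{27L}{(4L-1)^3}$ with prefactors $(1-4L)^{3/4}$ and $(1-4L)^{-5/4}$; Goursat's quartic transformation (formula (127)) converts the hypergeometric functions of the intermediate equation into functions of $\tau$ with argument $w=\frac{64\tau(1-\tau)^3}{(1+8\tau)^3}$ and prefactors $(1+8\tau)^{\mp}$ — and it is here, not in a direct $L\leftrightarrow\tau$ transformation, that your degenerate series ${}_2F_1(-1,-\frac{2}{3};\frac{1}{3};\tau)=1+2\tau$ (together with ${}_2F_1(\frac{5}{3},2;\frac{5}{3};\tau)=(1-\tau)^{-2}$) collapse the inversion formula to $\frac{16\tau^{2/3}}{1+2\tau}$, recovering your parameter relation. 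Equating $w=\frac{v}{v-1}$ then yields the displayed bidegree-$(3,4)$ relation, and extracting cube roots with a consistent branch (your Puiseux-series check is a fine way to fix this) gives (\ref{ssi}).
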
  
\begin{proof}
We first apply the cubic transformation $(\frac{1}{3},\frac{2}{3},\frac{1}{2}) \stackrel{3}\gets (\frac{4}{3},\frac{2}{3},\frac{1}{2})$. Let $v=\frac{27 L}{(1+8L)^2(1-L)}$. We find $\frac{v}{v-1}=\frac{27L}{(4L-1)^3}$. The formulas (121) of \cite{goursat} give
\begin{align*}
{}_2F_1\left(-\frac{3}{4},\frac{7}{12};\frac{1}{3};L\right)=(1-4L)^{\frac{3}{4}}{}_2F_1\left(-\frac{1}{4},\frac{1}{12};\frac{1}{3};\frac{27L}{(4L-1)^3}\right).
\end{align*}
for $\al=-\frac{1}{4}$ and
\begin{align*}
{}_2F_1\left(-\frac{1}{12},\frac{5}{4};\frac{5}{3};L\right)=(1-4L)^{-\frac{5}{4}}{}_2F_1\left(\frac{5}{12},\frac{3}{4};\frac{5}{3};\frac{27L}{(4L-1)^3}\right).
\end{align*}
for $\al=\frac{5}{12}$. Together this gives the inversion formula for $L$
\begin{align*}
t=\frac{{}_2F_1(-\frac{1}{12},\frac{5}{4};\frac{5}{3};L)}{{}_2F_1(-\frac{3}{4},\frac{7}{12};\frac{1}{3};L)}L^{\frac{2}{3}}
=&\frac{1}{9}\frac{{}_2F_1(\frac{5}{12},\frac{3}{4};\frac{5}{3};\frac{v}{v-1})}{{}_2F_1(-\frac{1}{4},\frac{1}{12};\frac{1}{3};\frac{v}{v-1})}\left(\frac{v}{1-v}\right)^{\frac{2}{3}}.
\end{align*}
Now we apply the degree $4$ transformation $(\frac{1}{3},\frac{2}{3},\frac{1}{2}) \stackrel{4}\gets (\frac{1}{3},\frac{2}{3},2)$.
Let $w=\frac{64\tau(1-\tau)^3}{(1+8\tau)^3}$. We find using equation (127) of \cite{goursat} that the same parameter $\al=-\frac{1}{4}$  gives
\begin{align*}
{}_2F_1\left(-\frac{1}{4},\frac{1}{12};\frac{1}{3};w\right)=(1+8\tau)^{-\frac{3}{4}}{}_2F_1\left(-1,-\frac{2}{3};\frac{1}{3};\tau\right)
\end{align*}
and $\al=\frac{5}{12}$ gives
\begin{align*}
{}_2F_1\left(\frac{3}{4},\frac{5}{12};\frac{5}{3};w\right)=(1+8\tau)^{\frac{5}{4}}{}_2F_1\left(\frac{5}{3},2;\frac{5}{3};\tau\right).
\end{align*}
We therefore obtain
\begin{align*}
\frac{{}_2F_1(\frac{3}{4},\frac{5}{12};\frac{5}{3};w)}{{}_2F_1(-\frac{1}{4},\frac{1}{12};\frac{1}{3};w)}w^{\frac{2}{3}}
=&\frac{{}_2F_1(\frac{5}{3},2;\frac{5}{3};\tau)(1+8\tau)^{\frac{5}{4}} }{{}_2F_1(-1,-\frac{2}{3};\frac{1}{3};\tau)(1+8\tau)^{-\frac{3}{4}} }\left(\frac{4\tau^{\frac{1}{3}}(1-\tau)}{1+8\tau}\right)^{2}\\
=&\frac{16 \tau^{\frac{2}{3}}}{1+2\tau}.
\end{align*}
Equating $w=\frac{v}{v-1}$, we obtain 
\begin{align*}
\frac{64\tau(1-\tau)^3}{(1+8\tau)^3}=\frac{27L}{(4L-1)^3}
\end{align*}
and the parameters are related by
\begin{align*}
\frac{9t}{16}=\frac{\tau^{\frac{2}{3}}}{(1+2\tau)}
\end{align*}
where the right hand side is the inverse $\tau=s(\frac{1}{3},\frac{2}{3},2,\frac{9t}{16})$.
\end{proof}

 Let $R(\tau(t))$ denote the right hand side of (\ref{ssi}).
We have $t=\frac{16\tau^{\frac{2}{3}}}{9(1+2\tau)}$ and
\begin{align*}
L'=-3\frac{L(4L-1)}{8L+1}\frac{R'}{R}.
\end{align*}
Analogous to Proposition \ref{subst}, we find that
\begin{align*}
\frac{2R}{3R'}\left(y-3\frac{\der}{\der t}\log\frac{R'}{R}\right)=\frac{64 L^3-48L^2+66L-1}{(8L+1)^2(L-1)}.
\end{align*}
 Eliminating $L$ between this equation and (\ref{ssi}) give us the solution
\begin{align}\label{lmn}
(t,y)=\left(\frac{16\tau^{\frac{2}{3}}}{9(1+2\tau)}, \frac{81}{64}\frac{(1+2\tau)(10\tau-1)}{\tau^{\frac{2}{3}}(\tau-1)^2}\right).
\end{align} 
Using the formula $H=\int \int e^{\frac{2}{3}\int y\der t}\der t \der t$, we obtain  
\begin{align*}
(t,H)=\left(\frac{16\tau^{\frac{2}{3}}}{9(1+2\tau)}, \frac{1024}{81}\frac{\tau^{\frac{1}{3}}}{1+2\tau}\right).
\end{align*}
The dual curves are parametrised by 
\begin{align*}
(x,F)=\left(\frac{32(4\tau-1)}{9\tau^{\frac{1}{3}}(\tau-1)}, \frac{512}{81}\frac{\tau^{\frac{1}{3}}}{\tau-1}\right).
\end{align*}

\subsection{The Schwarz function $s(\frac{2}{3},\frac{1}{3},2,t)$ }\label{oct}

For the Schwarz function $s(\frac{2}{3},\frac{1}{3},2,t)$, the parametrisation for $t$ is found to be given by
\[
t=\frac{ 2s^{\frac{1}{3}}}{(s+2)}
\]
while the 
formula for $H(t(s))$ is found to be given by
\[
H(t(s))=-\frac{4s^{\frac{2}{3}}}{s+2}.
\]
Let us explain how we determine this. For the values of $(\al,\beta,\ga)=(\frac{2}{3},\frac{1}{3},2)$, we find $(a,b,c)=(-1,-\frac{1}{3},\frac{2}{3})$.
The general solution to $u_{ss}+\frac{1}{4}V(s)u=0$ with these corresponding values
is given by $u=\beta s^{\frac{1}{3}}+\al (s+2)$.
Let $u_1=(s+2)$ and $u_2=2 s^{\frac{1}{3}}$. Then 
\[
t=\frac{2s^{\frac{1}{3}}}{(s+2)}=\frac{s^{\frac{1}{3}}}{{}_2F_1(-1,-\frac{1}{3};\frac{2}{3};s)}
\]
agrees with the formula given by (\ref{quot})
and we find that
\begin{align*}
y=\frac{\der}{\der t}\log\frac{(s')^3}{s^2(s-1)^{\frac{3}{2}}}=-\frac{9}{8}\frac{(s+2)(s-10)}{(s-1)^2}s^{\frac{2}{3}}.
\end{align*}
We thus find that $s(t)$ is given by the root of the cubic equation
\begin{align*}
s^3+6 s^2+\left(12-\frac{8}{t^3}\right)s+8=0.
\end{align*}
Eliminating the variable $s$, we obtain a polynomial algebraic curve $C(t,y)$ given by
\begin{align*}
C(t,y)=&(2t-3)^2 (4t^2+6t+9)^2 y^3+18t(2t-3)(2t^3-27)(4t^2+6t+9)y^2\\
&+324t^2(t^6-45t^3+243)y-1458t^3(5t^3-108)=0.
\end{align*}
We find that
\begin{align*}
\der t=-\frac{4}{3}\frac{s-1}{(s+2)^2 s^{\frac{2}{3}}}\der s
\end{align*}
and therefore
\begin{align*}
H=\int \int e^{\frac{2}{3}\int y \der t}\der t \der t=-\frac{4s^{\frac{2}{3}}}{s+2}.
\end{align*}
We can eliminate $s$ to get
\begin{align*}
H^3-16 t^3-8 H t=0. 
\end{align*}
Using the formula $(x,F)=(H',t H'-H)$, we find that the dual curves are given by
\begin{align*}
(x,F)=\left(-\frac{s-4}{s-1}s^{\frac{1}{3}}, \frac{2 s^{\frac{2}{3}}}{s-1}\right).
\end{align*}

To summarise the results of Section \ref{gen32}, we have the following
\begin{theorem}
The functions $I$, $J$ $K$ give rise to the curve $(t,H)$ parametrised (up to constants) by 
\begin{align}\label{para1}
(t,H)=\left(\frac{2s^{\frac{1}{3}}}{s+2},-\frac{4s^{\frac{2}{3}}}{s+2}\right)
\end{align}
where $s=s(\frac{2}{3},\frac{1}{3},2,t)$
and the dual curve
\begin{align*}
(x,F)=\left(-\frac{s-4}{s-1}s^{\frac{1}{3}}, \frac{2 s^{\frac{2}{3}}}{s-1}\right)
\end{align*}
determines a flat $(2,3,5)$-distribution $\cD_{F(x)}$. 
The functions $L$, $M$ $N$ give rise to the curve $(t,H)$ parametrised (up to constants) by
\begin{align*}
(t,H)=\left(\frac{2s^{\frac{2}{3}}}{1+2s},-\frac{4s^{\frac{1}{3}}}{1+2s}\right)
\end{align*}
where this is obtained by inverting $s \mapsto \frac{1}{s}$ in (\ref{para1}).
The dual curve given by
\begin{align*}
(x,F)=\left(-\frac{4s-1}{s^{\frac{1}{3}}(s-1)}, \frac{2 s^{\frac{1}{3}}}{1-s}\right)
\end{align*}
also determines a flat $(2,3,5)$-distribution $\cD_{F(x)}$. 
\end{theorem}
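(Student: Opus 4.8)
The plan is to assemble the reparametrisations and Legendre computations already carried out in Sections \ref{gen3} and \ref{oct}, together with the duality of \cite{annur}, rather than to recompute the curves from scratch.

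First I would treat the triple $I$, $J$, $K$. By Proposition \ref{jkl} these three Schwarz functions produce one and the same solution of the $k=3$ Chazy equation, and by Corollary \ref{lll} the corresponding $k=\frac{3}{2}$ solution is obtained from it by a single logarithmic deformation. The computation closing Section \ref{gen3} shows that this $k=\frac{3}{2}$ solution, after the substitution $t=\frac{\sigma^{1/3}}{\sigma+2}$ with $\sigma=s(\frac{2}{3},\frac{1}{3},2,t)$, coincides with the solution produced in Section \ref{oct}. Since that $\sigma$ agrees with $s=s(\frac{2}{3},\frac{1}{3},2,t)$ up to a constant rescaling of $t$, the associated curve $(t,H)$ is exactly (\ref{para1}), and its Legendre dual $(x,F)=(H',tH'-H)$ was evaluated in Section \ref{oct} to be $\left(-\frac{s-4}{s-1}s^{1/3},\frac{2s^{2/3}}{s-1}\right)$, which is the first dual curve in the statement.

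Next I would treat $L$, $M$, $N$. By the proposition establishing that these three functions determine a single $k=\frac{3}{2}$ solution, together with the reparametrisation by $\tau=s(\frac{1}{3},\frac{2}{3},2,\frac{9t}{16})$ leading to (\ref{lmn}), that solution yields $(t,H)=\left(\frac{16\tau^{2/3}}{9(1+2\tau)},\frac{1024}{81}\frac{\tau^{1/3}}{1+2\tau}\right)$. I would then observe that the substitution $s\mapsto\frac{1}{s}$ carries (\ref{para1}) to $\left(\frac{2s^{2/3}}{1+2s},-\frac{4s^{1/3}}{1+2s}\right)$, which is precisely the $(t,H)$ curve claimed for $L$, $M$, $N$; matching this against the $\tau$-parametrisation fixes the constants to those absorbed into the statement. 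Applying $(x,F)=(H',tH'-H)$ to this curve then produces the second dual curve $\left(-\frac{4s-1}{s^{1/3}(s-1)},\frac{2s^{1/3}}{1-s}\right)$.

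It remains to establish flatness of the two distributions $\cD_{F(x)}$. In both cases the generating $y$ solves the $k=\frac{3}{2}$ Chazy equation, so $H=\int\int e^{\frac{2}{3}\int y\,\der t}\,\der t\,\der t$ solves the seventh order ODE (\ref{h-noth}). By the Legendre duality of \cite{annur} recalled before the theorem, $H$ solves (\ref{h-noth}) if and only if the dual $F$ solves the sixth order ODE (\ref{6ode}), which reduces to the $k=\frac{2}{3}$ Chazy equation. Hence $F''=e^{\frac{1}{2}\int\tilde y\,\der x}$ for a solution $\tilde y$ of the $k=\frac{2}{3}$ equation, and by Cartan's criterion recalled in the introduction the fundamental curvature invariant of $\cD_{F(x)}$ vanishes, so each distribution is a flat $(2,3,5)$-distribution with split $G_2$ symmetry. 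The main obstacle is the bookkeeping of constants through the Legendre transform: one must confirm that the $(t,H)$ curve obtained from $L$, $M$, $N$ via the $\tau$-parametrisation (\ref{lmn}) genuinely is the $s\mapsto\frac{1}{s}$ image of (\ref{para1}) once the rescalings absorbed into the statement are accounted for. This reconciliation rests on the hypergeometric transformation identities, in particular (\ref{ssi}) and the quadratic relations tying $L$, $M$, $N$ together, and both $H'$ and $tH'-H$ acquire multiplicative constants that must be tracked consistently for the dual curves to emerge in the displayed normalised form.
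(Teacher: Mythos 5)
Your proposal is correct and follows essentially the same route as the paper: the theorem there is explicitly a summary, assembled exactly as you do, from Proposition \ref{jkl} and Corollary \ref{lll}, the $\sigma$-reparametrisation closing Section \ref{gen3}, the computations of $(t,H)$ and $(x,F)=(H',tH'-H)$ in Section \ref{oct}, the analogous proposition and $\tau$-parametrisation (\ref{lmn}) for $L$, $M$, $N$, and the An--Nurowski duality combined with Cartan's vanishing-curvature criterion for flatness. Your only addition is to spell out the flatness argument ($H$ solves (\ref{h-noth}) $\Leftrightarrow$ $F$ solves (\ref{6ode}), hence $F''=e^{\frac{1}{2}\int\tilde y\,\der x}$ with $\tilde y$ solving the $k=\frac{2}{3}$ equation), which the paper leaves implicit as background from the introduction.
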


Figure $\ref{map3}$ shows the transformation maps between the Schwarz triangle functions that show up in the $k=\frac{3}{2}$ and $k=3$ solutions. The Schwarz functions $s(\frac{1}{2},\frac{1}{3},\frac{2}{3},x)$, $s(\frac{2}{3},\frac{2}{3},\frac{2}{3},x)$ and $s(\frac{4}{3},\frac{1}{3},\frac{1}{3},x)$ appear in the solutions to the $k=3$ and $k=\frac{3}{2}$ equations. The three diagrams $\ref{map1}$, $\ref{map2}$ and $\ref{map3}$ can be combined at the nodes labelled by $(\frac{1}{3},\frac{1}{3},1)$ and $\left(\frac{3}{2},\frac{1}{3},\frac{1}{2}\right)$.

\begin{figure}[h]
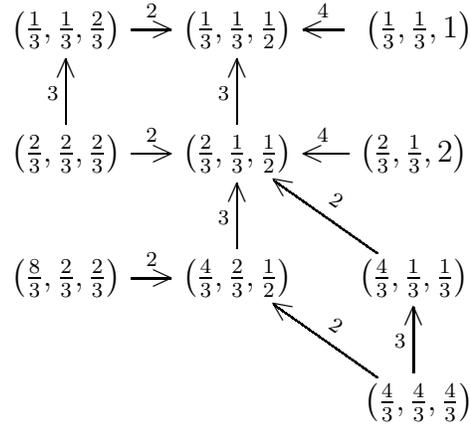

\begin{diagram}
\left(\tfrac{1}{3},\tfrac{1}{3},\tfrac{2}{3}\right)    &\rTo^{2} &\left(\tfrac{1}{3},\tfrac{1}{3},\tfrac{1}{2}\right)         &\lTo^{4}   &{\ \left(\tfrac{1}{3},\tfrac{1}{3},1\right)}\\
\uTo^{3} &           &\uTo^{3} &           &\\
\left(\tfrac{2}{3},\tfrac{2}{3},\tfrac{2}{3}\right)  &\rTo^{2}     &\left(\tfrac{2}{3},\tfrac{1}{3},\tfrac{1}{2}\right)           &\lTo^{4}   &\left(\tfrac{2}{3},\tfrac{1}{3},2\right)  \\
& &\uTo^{3}&\luTo^{2}&\\
\left(\tfrac{8}{3},\tfrac{2}{3},\tfrac{2}{3}\right)  &\rTo^{2}     &\left(\tfrac{4}{3},\tfrac{2}{3},\tfrac{1}{2}\right)           &  &\left(\tfrac{4}{3},\tfrac{1}{3},\tfrac{1}{3}\right)  \\
&     & & \luTo^{2}      & \uTo^{3}\\
&     &       & & {\ \left(\tfrac{4}{3},\tfrac{4}{3},\tfrac{4}{3}\right)}\\
\end{diagram}
\caption{Mapping of Schwarz functions for $k=\frac{3}{2}$ and $k=3$}
\label{map3}
\end{figure}

We end by discussing the shapes of the spherical triangles that show up in the $k=\frac{3}{2}$ and $k=3$ cases. The spherical triangle with angles $(\frac{1}{2}\pi$, $\frac{1}{3}\pi$, $\frac{2}{3}\pi)$ corresponding to $s(\frac{1}{2},\frac{1}{3},\frac{2}{3},x)$ is given by the following. Divide the hemisphere equally into three lunes, with each end having angles $\frac{\pi}{3}$. The domain for the $(\frac{1}{2}\pi, \frac{1}{3}\pi, \frac{2}{3}\pi)$ triangle is the complement of the fundamental domain of the $(\frac{1}{2}\pi, \frac{1}{3}\pi, \frac{1}{3}\pi)$ triangle with tetrahedral symmetry in this lune. Eight of these triangles tile the whole sphere. The triangle with angles $(\frac{2}{3}\pi, \frac{2}{3}\pi, \frac{2}{3}\pi)$ is generated by reflecting this domain along the long edge meeting the right angle, while the triangle with angles $(\frac{1}{3}\pi, \frac{1}{3}\pi, \frac{4}{3}\pi)$ is generated by reflection along the short edge meeting the right angle. 

When $k=\frac{3}{2}$, the triangle with angles $(\frac{1}{2}\pi$, $\frac{2}{3}\pi, \frac{4}{3}\pi)$ also show up. This is the complement of the $(\frac{1}{2}\pi$, $\frac{1}{3}\pi$, $\frac{2}{3}\pi)$ triangle in the hemisphere with the edge between the $\frac{1}{2}\pi$ and $\frac{1}{3}\pi$ angles lying along the equator. The other triangle with angles $(\frac{4}{3}\pi$, $\frac{4}{3}\pi,\frac{4}{3}\pi)$ is generated by reflecting along the long edge meeting the right angle (or equivalently, the equator). This is also the complement of the equilateral triangle with angles $(\frac{2}{3}\pi$, $\frac{2}{3}\pi$, $\frac{2}{3}\pi)$ in the whole sphere. Reflecting along the short edge meeting the right angle gives the ``triangle" with angles $(\frac{8}{3}\pi, \frac{2}{3}\pi, \frac{2}{3}\pi)$, which overlaps at the vertex with angle $\frac{8}{3}\pi$. 

\appendix

\section{Solving quartic equation}\label{appA}
The degree four polynomial equation 
\begin{equation*}
s^4-2s^3-4xs+2x=0
\end{equation*}
can be solved by radicals for $s$ in terms of $x$. 
Substituting $s=\xi+\frac{1}{2}$, we bring it to the form  
\begin{equation}\label{polyt}
\xi^4-\frac{3}{2}\xi^2-(4x+1)\xi-\frac{3}{16}=0. 
\end{equation}
From the coefficients of (\ref{polyt})
we can form the resolvent cubic
\begin{equation}\label{resolvent}
\left(u+\frac{3}{2}\right)\left(u^2+\frac{3}{4}\right)=(4x+1)^2
\end{equation}
and solve for $u$ to obtain
\begin{align*}
u=2(x(2x+1))^{\frac{1}{3}}-\frac{1}{2}, \hspace{12pt}
\left(-\frac{1}{2}+i\frac{\sqrt{3}}{2}\right)2(x(2x+1))^{\frac{1}{3}}-\frac{1}{2}, \hspace{12pt}
\left(-\frac{1}{2}-i\frac{\sqrt{3}}{2}\right)2(x(2x+1))^{\frac{1}{3}}-\frac{1}{2}
\end{align*}
as its roots. 
For $A=\sqrt{u+\frac{3}{2}}$ where $u$ is a solution to (\ref{resolvent}), we form $P=\xi^2+\frac{1}{2}u$ and $
Q=A\xi+\frac{4x+1}{2A}$. Then $P^2=Q^2$ iff (\ref{polyt}) holds, but now $P=\pm Q$ is quadratic in $\xi$ and so the roots can be found using the quadratic formula. For
\[
u_0=(8x(2x+1))^{\frac{1}{3}}-\frac{1}{2},
\]
the roots for $\xi$ are given by: 
\begin{align*}
\xi=&\frac{1}{2\sqrt{(8x(2x+1))^{\frac{1}{3}}+1}}\left((8x(2x+1))^{\frac{1}{3}}+1\right)\\
\pm&\frac{\sqrt{-(8x(2x+1))^{\frac{2}{3}}+(8x(2x+1))^{\frac{1}{3}}+2(4x+1)\sqrt{(8x(2x+1))^{\frac{1}{3}}+1}+2 }}{2\sqrt{(8x(2x+1))^{\frac{1}{3}}+1}},\\
\xi=&-\frac{1}{2\sqrt{(8x(2x+1))^{\frac{1}{3}}+1}}\left(8x(2x+1))^{\frac{1}{3}}+1\right)\\
\pm&\frac{\sqrt{-(8x(2x+1))^{\frac{2}{3}}+(8x(2x+1))^{\frac{1}{3}}-2(4x+1)\sqrt{(8x(2x+1))^{\frac{1}{3}}+1}+2 }}{2\sqrt{(8x(2x+1))^{\frac{1}{3}}+1}}.
\end{align*}

\section{Duality of generalised Chazy equations}\label{uod}

 There is a Legendre duality \cite{annur} that takes the generalised Chazy equation with parameters $k=\frac{2}{3}$ to its dual with parameters $k=\frac{3}{2}$. This is explained in \cite{annur} and \cite{r16}. 
We show that the generalised Chazy equation that is Legendre dual to another generalised Chazy equation has parameter $k$ given either by $\pm\frac{2}{3}$ or $\pm\frac{3}{2}$.
\begin{proposition} Let $m=\frac{4}{36-k^2}$ in equation (\ref{gc0}).
Any generalised Chazy equation can be put into the form 
\begin{align}\label{4thode}
&f^3 f''''-2 (\ell+2) f^2 f' f'''+3(-12 \ell m+\ell-1)f^2 (f'')^2\\
&+12(\ell^2m+6\ell m+1) f (f')^2 f''-(\ell^3 m+12 \ell^2 m+36 \ell m+\ell +6) (f'')^4=0\nonumber
\end{align}
using the substitution $y=\ell\frac{f'}{f}$ for $\ell$ non-zero.
\end{proposition}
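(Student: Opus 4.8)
The plan is to substitute $y=\ell f'/f$ directly into (\ref{gc0}) and to organise the computation by logarithmic-derivative bookkeeping. First I would abbreviate $P=f'/f$, $Q=f''/f$, $R=f'''/f$, $S=f''''/f$ and record the elementary recursion obtained by differentiating $f^{(n)}/f$,
\begin{align*}
P'=Q-P^2,\qquad Q'=R-PQ,\qquad R'=S-PR.
\end{align*}
These three identities are all that is needed to differentiate $y$ repeatedly while staying inside the ring of polynomials in $P,Q,R,S$.

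Using the recursion I would compute the first three derivatives of $y=\ell P$ in turn:
\begin{align*}
y'&=\ell(Q-P^2),\\
y''&=\ell(R-3PQ+2P^3),\\
y'''&=\ell(S-4PR-3Q^2+12P^2Q-6P^4).
\end{align*}
After this step every ingredient of (\ref{gc0}) is a polynomial in $P,Q,R,S$ with coefficients in $\ell$ and $m$; in particular $6y'-y^2=6\ell Q-(\ell^2+6\ell)P^2$.

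I would then substitute into $y'''-2y''y+3(y')^2-m(6y'-y^2)^2$ and collect according to the monomials $S$, $PR$, $Q^2$, $P^2Q$, $P^4$. That these are the only monomials that can occur follows from a weight count: assigning $P,Q,R,S$ the weights $1,2,3,4$ (the net order of differentiation each carries), every term of (\ref{gc0}) is homogeneous of weight $4$ under $y=\ell P$, so only weight-$4$ monomials survive; this also forces the final term below to be $(f')^4$ and not $(f'')^4$. The coefficient of $S$ is $\ell$, so after dividing through by $\ell$ and clearing denominators by multiplying by $f^4$, the five monomials become $f^3f''''$, $f^2f'f'''$, $f^2(f'')^2$, $f(f')^2f''$ and $(f')^4$, using $f^4S=f^3f''''$, $f^4PR=f^2f'f'''$, $f^4Q^2=f^2(f'')^2$, $f^4P^2Q=f(f')^2f''$ and $f^4P^4=(f')^4$.

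What remains is to check the five resulting coefficients against (\ref{4thode}). I expect the main bookkeeping obstacle to be the expansion of $(6y'-y^2)^2$, because $6y'-y^2$ feeds the $Q^2$, $P^2Q$ and $P^4$ coefficients at once, and these same three monomials also receive contributions from $y'''$, $-2y''y$ and $3(y')^2$; combining them correctly is the only place the algebra is non-trivial. Carrying this out yields coefficient $3(\ell-1-12\ell m)=3(-12\ell m+\ell-1)$ for $f^2(f'')^2$, coefficient $12(\ell^2m+6\ell m+1)$ for $f(f')^2f''$, and coefficient $-(\ell^3m+12\ell^2m+36\ell m+\ell+6)$ for $(f')^4$, in agreement with (\ref{4thode}). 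This establishes that every generalised Chazy equation can be brought to the stated fourth-order form under $y=\ell f'/f$.
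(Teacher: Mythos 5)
Your proof is correct and follows the only natural route---direct substitution of $y=\ell f'/f$ into (\ref{gc0}) organised by the logarithmic-derivative recursion for $P,Q,R,S$---which is precisely the computation the paper leaves implicit, as it states this proposition without proof; your coefficients check out (e.g.\ they reproduce $f''''=0$ for $k=2$, $\ell=-2$ and $2ff''''-2f'f'''+(f'')^2=0$ for $k=3$, $\ell=-\tfrac{3}{2}$). Your weight-count observation is also right: the final term of (\ref{4thode}) must be $(f')^4$, so the $(f'')^4$ appearing in the paper's statement (and likewise $(h'')^4$ in the dual equation that follows) is a typo.
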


To pass to the dual equation, we use the substitution $f=\frac{1}{h}$ and $\frac{\der}{\der x}=\frac{1}{h}\frac{\der }{\der t}$ to determine $f$, $f'$, $f''$, $f'''$ and $f''''$ in terms of $h$ and its derivatives with respect to $t$ and we obtain 
\begin{align*}
&h^3 h''''+(2 \ell-11) h^2 h' h'''+(36 \ell m-3\ell-7)h^2 (h'')^2\\
&+(12\ell^2m-144\ell m-2\ell+59) h (h')^2 h''+(\ell^3 m-24 \ell^2 m+144 \ell m+4\ell-48) (h'')^4=0.
\end{align*}

Hence any equation of the form 
\begin{align*}
&h^3 h''''+(2 j-11) h^2 h' h'''+(36j n-3j-7)h^2 (h'')^2\\
&+(12j^2n-144j n-2j+59) h (h')^2 h''+(j^3 n-24j^2 n+144 jn+4j-48) (h'')^4=0
\end{align*}
is a Chazy equation only if the coefficients agree with that in equation (\ref{4thode}). In this case, $n$ and $m$ is determined completely and is given by either $\frac{16}{135}$ or $\frac{9}{80}$. This gives $k=\pm\frac{2}{3}$ or $k=\pm\frac{3}{2}$.

\section*{Acknowledgements}
The author would like to thank the anonymous reviewers for criticisms of the earlier manuscript.

\end{document}